\title{On the cohomology of integral $p$-adic unipotent radicals}
\author{Niccol\`o Ronchetti}
\date{\today}
\theoremstyle{theorem}
\newtheorem{thm}{Theorem}
\newtheorem{lem}[thm]{Lemma}
\newtheorem{prop}[thm]{Proposition}
\newtheorem{fact}[thm]{Fact}
\newtheorem{claim}[thm]{Claim}
\newtheorem{cor}[thm]{Corollary}
\theoremstyle{definition}
\newtheorem{defn}{Definition}
\theoremstyle{remark}
\newtheorem*{rem}{Remark}
\newcommand{\Hom}{\ensuremath{\mathrm{Hom}}}
\newcommand{\Ext}{\ensuremath{\mathrm{Ext}}}
\newcommand{\Tor}{\ensuremath{\mathrm{Tor}}}
\newcommand{\Aut}{\ensuremath{\mathrm{Aut}}}
\newcommand{\Mod}{\ensuremath{\mathrm{Mod}}}
\newcommand{\Lie}{\ensuremath{\mathrm{Lie} \,}}
\newcommand{\Span}{\ensuremath{\mathrm{span}}}
\newcommand{\Gal}{\ensuremath{\mathrm{Gal}}}
\newcommand\floor[1]{\lfloor#1\rfloor}
\newcommand\ceil[1]{\lceil#1\rceil}
\newcommand{\id}{\ensuremath{\mathrm{id}}}
\newcommand{\gr}{\ensuremath{\mathrm{gr}}} 
\newcommand{\ad}{\ensuremath{\mathrm{ad}}}
\newcommand{\im}{\ensuremath{\mathrm{Im}}}
\newcommand{\lra}{\ensuremath{\longrightarrow}}
\newcommand{\Qp}{\ensuremath{\mathbb Q_p}}
\newcommand{\N}{\ensuremath{\mathbb N}}
\newcommand{\FFp}{\ensuremath{\overline { \mathbb F_p}}}
\newcommand{\Z}{\ensuremath{\mathbb Z}}
\newcommand{\Zp}{\ensuremath{\mathbb Z_p}}
\newcommand{\C}{\ensuremath{\mathbb C}}
\newcommand{\Res}[3]{\ensuremath{\mathrm{Res}_{#1 / #2}  #3 }} 
\newcommand{\OO}{\ensuremath{\mathcal O}}
\begin{document}
\maketitle
\begin{abstract}
    Let $\mathrm G$ be a reductive split $p$-adic group and let $\mathrm U$ be the unipotent radical of a Borel subgroup. We study the cohomology with trivial $\Zp$-coefficients of the profinite nilpotent group $N = \mathrm U(\OO_F)$ and its Lie algebra $\mathfrak n$, by extending a classical result of Kostant to our integral $p$-adic setup. The techniques used are a combination of results from group theory, algebraic groups and homological algebra.
\end{abstract}
\section{Introduction}
In this paper we study the cohomology of the group of integral points of unipotent radicals of $p$-adic algebraic groups.

Let $p \ge 5$ be a prime number. Let $F$ be a $p$-adic field, with ring of integer $\OO_F$, and denote $d = [F : \Qp]$.
Let $\mathrm G$ be a connected, reductive, split $F$-group. Fix once and for all a maximal split $F$-torus $\mathrm T$, as well as a Borel subgroup $\mathrm B$ containing $\mathrm T$ and defined over $F$. Let $\mathrm U = \mathcal R_u(\mathrm B)$ be the unipotent radical of $\mathrm B$.

We also fix integral, smooth $\OO_F$-models for all the group schemes considered above (see \ref{notationsection} for their construction), and we still denote them by $\mathrm G, \mathrm B, \mathrm U, \mathrm T$.

Our main object of study is the continuous cohomology of the profinite group $\mathrm U(\OO_F)$ with coefficients into finitely generated $\Zp$-modules, in particular the trivial module, and we want to understand these objects as $\mathrm T(\OO_F)$-modules.

Our first result is a slight modification of a celebrated theorem of Kostant that describes Lie algebra cohomology for the unipotent $\OO_F$-Lie algebra  $\mathfrak u = \Lie \mathrm U$. We prove a `$p$-adic integral' version, very similar (but not identical) to those already proven in \cite{PT,vigre,GK}.
\begin{restatable}{thm}{kostant}[Kostant's theorem] \label{kostantthm} Let $\mathrm G$ be simple and simply connected.
Suppose that $p \ge h$, the Coxeter number of $\mathrm G$.
Consider the trivial $\mathfrak u$-module $\OO_F$.
Then its cohomology is \[ H^n_{\OO_F} \left( \mathfrak u, \OO_F \right) \cong \bigoplus_{w \in W, l(w) = n} V_{\OO_F}(w \cdot 0) \] where $V_{\OO_F}(\lambda)$ is the highest-weight $\mathrm T$-module associated\footnote{in other words, $V_{\OO_F} (\lambda) \cong \OO_F$ with $\mathrm T(\OO_F)$ acting via $\lambda$.} to the character $\lambda$ and $l(w)$ denotes the length of the Weyl element $w$, defined as in section \ref{notationsection}.
\end{restatable}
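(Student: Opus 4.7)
The plan is to compute $H^\bullet_{\OO_F}(\mathfrak u, \OO_F)$ via the Chevalley--Eilenberg complex $C^\bullet := \Hom_{\OO_F}(\Lambda^\bullet \mathfrak u, \OO_F)$, a complex of finite free $\OO_F$-modules. Because $\mathrm T(\OO_F)$ acts on $\mathfrak u$ through the adjoint action, it acts on $C^\bullet$ compatibly with the differential, and the complex decomposes into $X^*(\mathrm T)$-weight components. The target is then to show that each $H^n_{\OO_F}(\mathfrak u, \OO_F)$ is $\OO_F$-free and that its weight decomposition matches $\bigoplus_{l(w) = n} V_{\OO_F}(w \cdot 0)$.

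The rational part is straightforward: flatness of $F$ over $\OO_F$ gives $H^n_{\OO_F}(\mathfrak u, \OO_F) \otimes_{\OO_F} F \cong H^n(\mathfrak u_F, F)$, and the classical Kostant theorem in characteristic zero (applicable because $\mathrm G$ is $F$-split, so the weights are $F$-rational) produces exactly the expected decomposition on the right-hand side. This determines the $F$-rank of each weight component and pins down which weights can appear; what remains is to rule out $p$-torsion.

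The heart of the proof, and the only place where $p \ge h$ enters, is torsion-freeness. The approach I would take is Kostant's Hodge-theoretic argument adapted integrally. The Killing form on $\mathfrak g$ restricts to a perfect pairing on its Chevalley $\OO_F$-form once $p \ge h$, and this lets one build an adjoint $d^*$ to the Chevalley--Eilenberg differential together with a Laplacian $\Delta = dd^* + d^*d$ on $C^\bullet$. The standard computation shows that $\Delta$ acts on the weight-$\mu$ part of $\Lambda^n \mathfrak u^*$ by the Casimir scalar $\|\mu + \rho\|^2 - \|\rho\|^2$. The essential input is that for $p \ge h$ each such scalar is either zero or a unit in $\OO_F$ for every weight $\mu$ appearing in $\Lambda^\bullet \mathfrak u^*$; granting this, one obtains an integral Hodge decomposition of $C^\bullet$ with $\ker \Delta$ as an $\OO_F$-direct summand, identifying $H^n_{\OO_F}(\mathfrak u, \OO_F)$ with the $\OO_F$-free module $\ker\Delta|_{C^n}$. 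The classical combinatorial fact that the solutions of $\|\mu + \rho\|^2 = \|\rho\|^2$ among weights of $\Lambda^\bullet \mathfrak u^*$ are exactly the $w \cdot 0$ then gives the stated decomposition.

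The main obstacle is the integrality claim about the Casimir eigenvalues: making it precise requires a bound on $\|\mu + \rho\|^2 - \|\rho\|^2$ for $\mu$ ranging over the weights of $\Lambda^\bullet \mathfrak u^*$, combined with the standard identity $h = 1 + \max_{\alpha \in \Phi} \langle \rho, \alpha^\vee \rangle$ relating the Coxeter number to inner products with $\rho$. If this turns out to be too delicate to verify directly on the complex, the backup plan is to invoke the integral BGG resolution of the trivial module $\OO_F$, which is available for $p \ge h$ by \cite{PT}, and translate it via standard homological algebra into a direct computation of $H^\bullet_{\OO_F}(\mathfrak u, \OO_F)$ that recovers the decomposition term by term.
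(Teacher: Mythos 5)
Your rational step and the weight-decomposition setup are fine, but the heart of your argument --- the integral Hodge/Laplacian step --- contains a claim that is false at the level of generality $p \ge h$. The Laplacian acts on the weight-$\mu$ component of $\bigwedge^{\bullet} \mathfrak u^*$ by (a normalization of) $\|\mu+\rho\|^2 - \|\rho\|^2$, and you need every nonzero such scalar to be a $p$-adic unit. These scalars are \emph{not} bounded by $h$; they range up to roughly $\|\rho\|^2$, which grows quadratically in $h$. Concretely, in type $A_9$ (so $h=10$, and $p=11 \ge h$ is allowed), take $\mu = -(e_1-e_{10}) - (e_2-e_8)$, the sum of two orthogonal negative roots of heights $9$ and $6$: the scalar is $2(9-1)+2(6-1) = 26$... (adjusting heights, one can arrange the value $22$, e.g.\ roots of heights $8$ and $5$), which is nonzero, not of the form attached to any $\Phi(w)$, and divisible by $11$. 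So the Laplacian kernel is not an $\OO_F$-direct summand by this argument, and the torsion-freeness --- which is exactly the content the hypothesis $p \ge h$ is supposed to deliver --- is not established. Relatedly, the assertion that the Killing form is perfect on the Chevalley $\OO_F$-form once $p \ge h$ also fails (already for $\mathfrak{sl}_p$ with $p=h$); one must use a suitably normalized invariant form, and even then the eigenvalue-unit problem above remains. This is precisely why the characteristic-$p$ literature (Friedlander--Parshall, Polo--Tilouine, the UGA VIGRE group) does not run Kostant's harmonic-theory proof integrally but uses other tools.

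By contrast, the paper does not try to reprove the characteristic-$p$ statement at all: it uses the weight argument (lemma \ref{rootlemma} and lemma \ref{highestweightinkostant}) to produce an explicit injection $\bigoplus_{l(w)=n} V_{\Zp}(w\cdot 0) \hookrightarrow H^n_{\Zp}(\mathfrak u,\Zp)$, reduces to $F=\Qp$ by the base-change lemma \ref{Liecohomologybasechange}, and then shows the injection becomes an isomorphism after $\otimes\,\overline{\Qp}$ (classical Kostant) and after $\otimes\,\FFp$ by importing theorem 4.1.1 of \cite{vigre}, with the universal coefficient theorem and an induction on degree supplying freeness. Your backup plan --- the integral BGG resolution of \cite{PT}, which does apply to the trivial (hence $p$-small) weight when $p \ge h$ --- is a legitimate alternative route and much closer to what can actually be carried out, but as written it is only a pointer; to make your proposal a proof you would need to either develop that BGG argument or, as the paper does, quote the known mod-$p$ Kostant theorem rather than rederive it by harmonic theory.
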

This is proved in section \ref{kostantsubsection}, mainly following the ideas in \cite{vigre}.

Let $\mathfrak n$ be the Lie algebra $\mathfrak u$, viewed as a $\Zp$-algebra (see section \ref{notationsection} for the precise setup).
Since we are interested in cohomology with $\Zp$-coefficients, we record the following corollary which follows from theorem \ref{kostantthm}.
\begin{restatable}{cor}{kostantcorollary} \label{kostantcor} Suppose $F / \Qp$ is Galois.

The only $\Res{\OO_F}{\Zp} \mathrm T (\Zp) = \mathrm T(\OO_F)$-modules appearing in the cohomology $ H^n_{\Zp} \left( \mathfrak n, \Zp \right)$ are of the form $V_{\OO_F} \left( \sum_i (w_i \cdot 0) \right)$ as we vary $\{ w_i \}_{i=1}^d$ in the Weyl group subject to the condition that $\sum_{i=1}^d l(w_i) = n$. In particular, $H^*(\mathfrak n, \Zp)$ is a finite, free $\Zp$-module.

Fix an unordered $d$-uple $\underline w = \{ w_i \}_{i=1}^d$ of Weyl group elements (possibly repeated) whose sum of lengths is $n$. The multiplicity of the module $V_{\OO_F} \left( \sum_i \left( w_i \cdot 0 \right) \right)$ in $H^n_{\Zp} \left( \mathfrak n, \Zp \right)$ is the number of $\Gal(F / \Qp)$-orbits on $C_{\underline w}$ where $C_{\underline w}$ is the set of all bijections $\Aut(F / \Qp) \lra \underline w$ and $\Gal(F/ \Qp)$ acts by precomposition.
\end{restatable}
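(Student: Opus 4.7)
The corollary should follow from Theorem \ref{kostantthm} by base-changing to $\OO_F$, applying Künneth, and descending back to $\Zp$ via the Galois action.

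First, because $\OO_F$ is a free (hence flat) $\Zp$-module, the Chevalley--Eilenberg complex for $\mathfrak n$ with $\Zp$-coefficients becomes, after tensoring with $\OO_F$, the Chevalley--Eilenberg complex for $\mathfrak n \otimes_{\Zp} \OO_F$ with $\OO_F$-coefficients. Hence
\[ H^n_{\Zp}(\mathfrak n, \Zp) \otimes_{\Zp} \OO_F \cong H^n_{\OO_F}(\mathfrak n \otimes_{\Zp} \OO_F, \OO_F), \]
both $\mathrm T(\OO_F)$- and $\Gal(F/\Qp)$-equivariantly, with Galois acting through the second tensor factor. It therefore suffices to understand the right-hand side together with its Galois action.

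Second, I would analyze $\mathfrak n \otimes_{\Zp} \OO_F$ as an $\OO_F$-Lie algebra. Using the Galois hypothesis, one has $F \otimes_{\Qp} F \cong \prod_{\sigma \in \Gal(F/\Qp)} F$ via $x \otimes y \mapsto (\sigma(x) y)_\sigma$, and a corresponding decomposition of $\OO_F$-Lie algebras
\[ \mathfrak n \otimes_{\Zp} \OO_F \cong \bigoplus_{\sigma \in \Gal(F/\Qp)} \mathfrak u^\sigma, \]
where $\mathfrak u^\sigma$ denotes $\mathfrak u$ with the original $\OO_F$-action pre-composed with $\sigma$; Galois permutes the summands by translation on the index. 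The integral Künneth formula combined with Theorem \ref{kostantthm} applied to each summand yields
\[ H^n_{\OO_F}(\mathfrak n \otimes_{\Zp} \OO_F, \OO_F) \cong \bigoplus_{(w_\sigma) : \sum_\sigma l(w_\sigma) = n} \bigotimes_\sigma V_{\OO_F}(w_\sigma \cdot 0)^\sigma, \]
and each summand is rank one over $\OO_F$. Tracking the $\mathrm T(\OO_F)$-character through the $\sigma$-twist identifies the summand indexed by $(w_\sigma)$ with the module denoted $V_{\OO_F}(\sum_i w_i \cdot 0)$ in the statement, once one fixes a bijection $\Aut(F/\Qp) \to \underline w$ via $\sigma \mapsto w_\sigma$.

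Finally, the $\Gal(F/\Qp)$-action permutes such tuples by precomposition on the indexing set, and $H^n_{\Zp}(\mathfrak n, \Zp)$ is recovered as the Galois invariants of the base-changed cohomology by faithfully flat descent (applied integrally, which is effective because each cohomology summand is $\OO_F$-free). Two tuples in the same orbit contribute the same (isomorphic) $\mathrm T(\OO_F)$-module to $H^n_{\Zp}(\mathfrak n, \Zp)$, so the multiplicity of $V_{\OO_F}(\sum_i w_i \cdot 0)$ is the number of $\Gal(F/\Qp)$-orbits on bijections $\Aut(F/\Qp) \to \underline w$; finite freeness over $\Zp$ is inherited from the $\OO_F$-side. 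The main delicate point will be the integral decomposition of $\mathfrak n \otimes_{\Zp} \OO_F$ in the ramified case (where $\OO_F \otimes_{\Zp} \OO_F$ is not literally $\prod_\sigma \OO_F$) and verifying that the integral descent identifies invariants with $H^n_{\Zp}(\mathfrak n, \Zp)$; these can presumably be handled by exploiting that each cohomology module is $\OO_F$-free and torsion-free, so that the rational decomposition transports back integrally.
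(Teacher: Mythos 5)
Your skeleton is the same as the paper's: base change the Chevalley--Eilenberg complex to $\OO_F$ (this is exactly lemma \ref{Liecohomologybasechange}, including the identification of $H^n_{\Zp}(\mathfrak n,\Zp)$ with the Galois invariants, since Galois acts through the second tensor factor), decompose $\mathfrak n\otimes_{\Zp}\OO_F$ into Galois twists of $\mathfrak u$, apply Kostant's theorem \ref{kostantthm} and K\"unneth factorwise, and then descend. The genuine gap is in the descent step, which is where the content of the multiplicity statement lives. The Galois action on $\bigotimes_{\sigma} H^*_{\OO_F}\left(\mathfrak u^{\sigma},\OO_F\right)$ is only $\OO_F$-semilinear and permutes the weight lines, and the Galois-fixed torus sits inside $\prod_{\sigma}\mathrm T\times_{\OO_F,\sigma}\OO_F$ diagonally only after an explicit analysis of $\OO_F\otimes_{\Zp}\OO_F$; so the claims that ``tuples in the same orbit contribute the same $\mathrm T(\OO_F)$-module'' and that the multiplicity equals the number of orbits require checking that a diagonal element $t\in\mathrm T(\OO_F)$ acts on every Galois translate $\gamma.f_{\underline w}$ by the \emph{same} character $\sum_i(w_i\cdot 0)$, so that the descended (invariant) class of an orbit is a single eigenvector for that character. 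This is precisely what the paper proves via the semilinear compatibility \ref{galoistorusequivariance}, the resulting formula \ref{extendedtorusaction}, and the diagonal-embedding computation; it then realizes the invariants concretely through the averaging idempotent $e=\frac{1}{d}\sum_{\gamma}\gamma$ of formula \ref{projectionidempotent} (assuming first $p\nmid d$), shows the classes $e.f_{\underline w}$ give a basis of $H^*_{\Zp}(\mathfrak n,\Zp)$, and shows two classes coincide if and only if the indexing bijections lie in the same Galois orbit. In your write-up these points are asserted (``tracking the character through the twist'', ``descent is effective because the summands are $\OO_F$-free'') rather than proved, and integral effectivity of Galois descent for $\OO_F$-lattices with semilinear action is not automatic, particularly when $p\mid d$; the paper avoids claiming it by passing to $H^*_{\OO_F}(\mathfrak n_{\OO_F},\OO_F)\otimes_{\OO_F}F$, where $e$ is still defined, and reading off the characters there.

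On your closing worry: you are right that in the ramified case $\OO_F\otimes_{\Zp}\OO_F$ is not $\prod_{\sigma}\OO_F$, so the integral decomposition of $\mathfrak n\otimes_{\Zp}\OO_F$ into twists is not free of charge; the paper in fact invokes the product decomposition of $\left(\Res{\OO_F}{\Zp}\mathrm U\right)\times_{\Zp}\OO_F$ without comment, so you are not worse off for flagging it, but ``transporting the rational decomposition back integrally using freeness'' is only a gesture and would need to be carried out (or one should work rationally throughout that step, as the paper effectively does when $p\mid d$). So: right route, but the semilinearity computation pinning down the torus character on the descended classes, the orbit-versus-multiplicity identification, and a substitute for the idempotent/rationalization argument are missing.
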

We introduce $\Aut(F / \Qp)$ in the definition of the set $C_{\underline w}$ to emphasize that we are assigning a $w_i$ to each Galois element in $\Gal(F / \Qp)$, but we do not care about the group structure yet - until we consider the $\Gal(F / \Qp)$-orbits on $C_{\underline w}$.
\begin{rem} It may happen that different unordered $d$-uples $\{ w_i \}$ and $\{ w'_i \}$ give the same character $\sum_i (w_i \cdot 0) = \sum_i (w'_i \cdot 0)$ and therefore the two $\mathrm T$-modules are isomorphic. Understanding when that happens seems to be quite a difficult combinatorial problem which we have not attempted to solve.
\end{rem}

Next, we study the continuous group cohomology of $N = \mathrm U(\OO_F)$ by comparing it with the Lie algebra cohomology of $\mathfrak n$. More precisely, we follow Polo and Tilouine's blueprint in section 3 of \cite{PT} to obtain a spectral sequence relating Lie algebra cohomology and group cohomology. The notation and terminology related to filtered algebras are defined in section \ref{spectralsection}.
\begin{restatable}{thm}{equivariantss} \label{Lietogroupss} Consider the augmentation filtration on the completed group algebra $\Zp[[N]]$.
Let $V$ be a finitely generated $\Zp$-module with a continuous action of $\mathrm B(\OO_F)$. Suppose that as a filtered $T(\OO_F) \sharp \Zp[[N]]$-module, $V$ has a bounded and discrete filtration.

Then there exists a convergent spectral sequence of $\mathrm T(\OO_F)$-modules \[ E_1^{r,s} = H^{r+s}_{\gr \Zp[[N]]} \left( \gr \Zp, \gr V \right)_r  \Rightarrow E_{\infty}^{r,s} = H^{r+s}_{\Zp[[N]]} ( \Zp, V )_r = H^{r+s} (N, V)_r. \]
that converges to the graded $\mathrm T(\OO_F)$-module associated to a $\mathrm T(\OO_F)$-equivariant filtration of $H^*(N, V)$.
\end{restatable}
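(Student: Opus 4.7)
The plan, following the blueprint of Polo and Tilouine in Section 3 of \cite{PT}, is to produce a $T(\OO_F)$-equivariant filtered projective resolution of $\Zp$ by $\Zp[[N]]$-modules and then feed it into the standard machinery of the spectral sequence of a filtered complex. Concretely, I would first construct a free resolution $P_\bullet \to \Zp$ over the skew group ring $T(\OO_F) \sharp \Zp[[N]]$, equipped with a filtration compatible with the augmentation filtration on $\Zp[[N]]$ in such a way that the associated graded complex $\gr P_\bullet$ remains a free resolution of $\gr \Zp$ over $\gr \Zp[[N]]$. The $T(\OO_F)$-equivariance should be essentially automatic because $T(\OO_F)$ normalizes $N$ and hence acts on $\Zp[[N]]$ by filtered algebra automorphisms preserving the augmentation ideal; any natural choice---a completed bar complex or a Koszul-type resolution built from an ordered basis of $\mathfrak n$---can therefore be upgraded to the skew group ring.

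With $P_\bullet$ in hand, I would form the cochain complex $C^\bullet = \Hom_{\Zp[[N]]}(P_\bullet, V)$, whose cohomology is by definition $H^*(N, V)$, and equip it with the tensor-type filtration induced from the filtrations on $P_\bullet$ and $V$. The classical spectral sequence of a filtered complex then takes the shape
\[ E_1^{r,s} = H^{r+s}(\gr C^\bullet)_r = H^{r+s}_{\gr \Zp[[N]]}(\gr \Zp, \gr V)_r \Rightarrow H^{r+s}(N, V)_r, \]
where the first identification uses that $\gr P_\bullet$ is a free resolution of $\gr \Zp$ over $\gr \Zp[[N]]$. Convergence to the graded $T(\OO_F)$-module associated to the induced filtration on $H^*(N, V)$ would follow from the bounded-and-discrete hypothesis on the filtration of $V$: combined with the bounded-below filtration carried by $P_\bullet$, these ensure that in each cohomological degree the filtration on $C^n$ is bounded and exhaustive, which is the standard criterion. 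The equivariance of every arrow in sight under $T(\OO_F)$ follows because the $T(\OO_F)$-action commutes with each differential by construction.

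The principal obstacle is producing the filtered resolution $P_\bullet$ with all three properties simultaneously: $T(\OO_F)$-equivariance, compatibility of the filtration with the augmentation filtration on $\Zp[[N]]$, and exactness of $\gr P_\bullet$. The last point is the most delicate; it is a flatness statement for the associated graded of the augmentation filtration, analogous to how the PBW theorem ensures that the standard Koszul resolution of the trivial module over the enveloping algebra of a Lie algebra remains exact after passing to the symmetric algebra. Once $P_\bullet$ is constructed, the rest of the argument is formal: the filtered-complex spectral sequence is standard (and Polo--Tilouine supply the relevant version in \cite{PT}), and $T(\OO_F)$-equivariance is just a matter of working consistently with the skew group ring structure throughout.
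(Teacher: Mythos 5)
Your overall route is the same as the paper's (and Polo--Tilouine's): resolve $\Zp$ over the smash product $\mathrm T(\OO_F)\sharp\Zp[[N]]$ so that equivariance is automatic, apply $\Hom_{\Zp[[N]]}(-,V)$, and run the spectral sequence of the resulting filtered complex. The genuine gap is exactly the point you flag as the ``principal obstacle'' and then leave open: the existence of a filtered resolution $P_\bullet$ whose associated graded complex is still a free resolution of $\gr \Zp$ over $\gr \Zp[[N]]$. Gesturing at a completed bar complex or a Koszul-type complex does not settle this: for such explicit complexes one would still have to produce a filtration compatible with the augmentation filtration \emph{and} verify exactness after applying $\gr$, and neither is automatic (moreover the bar resolution has terms that are not filt finitely generated, which you need later). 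The paper avoids explicit complexes altogether and instead invokes Sj\"odin's theory of \emph{strict} filtered resolutions: every filt finitely generated module with exhaustive filtration admits a strict resolution by filt-free, finitely generated modules (proposition \ref{strictresolutionexists}), strict exact sequences remain exact after applying $\gr$, and $\gr \underline \Hom (L,V) \cong \Hom_{\gr A}\left( \gr L, \gr V \right)$ when $L$ is filt-free (lemma \ref{grHomcommute}); running this over $\Lambda \sharp A$ with $M = \Zp$ produces the equivariant resolution with all three of your desired properties at once. Without an argument of this kind (or an actual verification for your preferred explicit complex), the identification of the $E_1$-page is unproved.

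Two further points you pass over but which a complete proof must address. First, in order to filter $\Hom_{\Zp[[N]]}(P_n,V)$ at all, and to get a \emph{bounded and discrete} filtration (the convergence criterion the paper actually uses, rather than ``bounded and exhaustive''), one needs every $\Zp[[N]]$-linear map $P_n \lra V$ to have finite filt-degree; this is proposition \ref{Homdiscretefilt}, and it uses that each $P_n$ is a finite direct sum of shifts of bounded degree together with the bounded, discrete filtration on $V$ --- so finite generation of the resolution is a necessary input, not a convenience. Second, for coefficients $V$ that are finitely generated $\Zp$-modules (profinite, not discrete), the equality $H^*(N,V) = \Ext^*_{\Zp[[N]]}(\Zp,V)$ is not a definition one can simply quote; the paper justifies it via Lazard's theorem that $N$ is of type $\mathbf{FP}_{\infty}$ and the Symonds--Weigel formalism. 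Neither issue invalidates your outline, but both belong in the proof, and the first one interacts with your unconstructed $P_\bullet$: whatever resolution you choose must be filt-free and finitely generated in each degree for the convergence step to go through.
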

Finally, we show that for the trivial module $V = \Zp$, the spectral sequence converges at the first page, so that the previous results yield
\begin{restatable}{thm}{groupLie} \label{groupLiecomparison} There is a $\mathrm T(\OO_F)$-equivariant isomorphism \[ H^*_{\Zp} \left( \mathfrak n, \Zp \right) \cong \gr H^* \left( N, \Zp \right) \] between the Lie algebra cohomology of the $\Zp$-Lie algebra $\mathfrak n$ and the graded module associated to the $\mathrm T(\OO_F)$-equivariant filtration of $H^*(N, \Zp)$ induced by the augmentation ideal. 
\end{restatable}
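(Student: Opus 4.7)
The plan is to apply the spectral sequence of Theorem \ref{Lietogroupss} with $V = \Zp$ the trivial module, and to prove that it degenerates at the $E_1$ page. Once degeneration is known, the $E_\infty$ page automatically computes $\gr H^*(N, \Zp)$, and it remains only to identify the $E_1$ page with the Lie algebra cohomology $H^*_{\Zp}(\mathfrak n, \Zp)$.

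First I would identify the $E_1$ page. Since $\Zp$ carries the trivial action, $\gr \Zp = \Zp$ is concentrated in filtration degree zero, so $E_1^{r,s} = H^{r+s}_{\gr \Zp[[N]]}(\Zp, \Zp)_r$. By the standard Lazard-type identification as in \cite{PT}, $\gr \Zp[[N]]$ is (a completion of) the universal enveloping algebra of the graded Lie algebra attached to the augmentation filtration, which in our unipotent setting coincides with $\mathfrak n$ equipped with the grading by root heights. Computing cohomology through the Koszul resolution then realizes $E_1^{r,s}$ as the piece of $H^{r+s}_{\Zp}(\mathfrak n, \Zp)$ sitting in height degree $r$.

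Next I would establish degeneration via a $\mathrm T(\OO_F)$-weight argument, in the spirit of Polo--Tilouine. The differentials $d_k \colon E_k^{r,s} \to E_k^{r+k, s-k+1}$ are $\mathrm T(\OO_F)$-equivariant and thus respect weight spaces. By Corollary \ref{kostantcor}, each weight appearing in $H^*_{\Zp}(\mathfrak n, \Zp)$ is of the form $\chi = \sum_i (w_i \cdot 0)$, and its height grading is $r = \sum_i \sum_{\beta \in I(w_i)} \mathrm{ht}(\beta) = \langle -\chi, \rho^\vee \rangle$, where $I(w) = \{\alpha > 0 : w^{-1}\alpha < 0\}$. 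Since the right-hand side is a linear invariant of $\chi$ alone, the filtration degree of a weight space is uniquely determined by its $\mathrm T(\OO_F)$-weight. Consequently $d_k$ would have to send a weight space with weight $\chi$ and grade $r$ into a weight space with the same weight $\chi$ but grade $r+k \ne r$, which is impossible; hence $d_k = 0$ for all $k \ge 1$.

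The main obstacle is pinning down the identification of the $E_1$ page, specifically matching the Lazard grading on $\gr \Zp[[N]]$ with the root-height grading on $\mathfrak n$ and then using Kostant's Koszul construction (see section \ref{kostantsubsection}) to verify that $V_{\OO_F}(w \cdot 0)$ is represented by a wedge of root vectors indexed by $I(w)$, whose total height equals $\langle -w \cdot 0, \rho^\vee \rangle$. Once this is in place, both the degeneration and the stated isomorphism $H^*_{\Zp}(\mathfrak n, \Zp) \cong \gr H^*(N, \Zp)$ follow formally from the convergence asserted in Theorem \ref{Lietogroupss}.
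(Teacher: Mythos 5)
Your proposal is correct in outline, and its first half coincides with the paper: you specialize Theorem \ref{Lietogroupss} to $V=\Zp$ and identify the $E_1$-page with $H^*_{\Zp}(\mathfrak n,\Zp)$ through $\gr \Zp[[N]]\cong \mathcal U(\mathfrak n)$ with the root-height grading, which is exactly what theorems \ref{unipotentLieiso} and \ref{envelopingalgebraiso} provide (note it is the enveloping algebra on the nose, not a completion, since each graded piece of $\Zp[[N]]$ is a finite free $\Zp$-module). Where you genuinely diverge is the degeneration step. The paper never argues with weights: it shows that $E_1$ and the abutment have the same $\Qp$-dimension in each total degree, combining the universal coefficient theorem on the Lie algebra side with the comparison isomorphisms $H^*_c(N,\Zp)\otimes_{\Zp}\Qp\cong H^*_c(N,\Qp)\cong H^*_{la}(N,\Qp)\cong H^*_{\Qp}(\mathfrak n_{\Qp},\Qp)$ due to Symonds--Weigel, Lazard and Huber--Kings--Naumann; since $E_1$ is $\Zp$-free by corollary \ref{kostantcor}, equality of ranks forces all differentials to vanish. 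Your argument is instead the Polo--Tilouine-style purity argument: by corollary \ref{kostantcor} every $\mathrm T(\OO_F)$-isotypic constituent of $E_1$ is a $V_{\OO_F}(\chi)$ with $\chi=\sum_i (w_i\cdot 0)$, its filtration grade is the height of $-\chi$ (the height function extended linearly, and additive over the embedding factors coming from the Weil restriction), hence is determined by $\chi$ alone, while the differentials are equivariant and shift the grade; so they vanish. This is sound and buys something real: it avoids the analytic input altogether and is the version most likely to extend to nontrivial ($p$-small) highest-weight coefficients, where rank counting is unavailable, whereas the paper's route requires no control of which characters occur. One point you should make explicit: $\mathrm T(\OO_F)$ is profinite and acts $\Zp$-linearly, so the $V_{\OO_F}(\chi)$ are rank-$d$ isotypic modules rather than honest eigenspaces, and the claim that differentials respect the isotypic decomposition amounts to $\Hom_{\Zp[\mathrm T(\OO_F)]}\left(V_{\OO_F}(\chi),V_{\OO_F}(\chi')\right)=0$ for distinct algebraic characters $\chi\neq\chi'$; this holds (for instance, base change to $\overline{\Qp}$, where the constituents are the characters $\sigma\circ\chi$, and note that $\sigma\circ\chi=\tau\circ\chi'$ forces $\chi=\chi'$ by restricting to $\mathrm T(\Zp)$, then use torsion-freeness of the target), and with $d_1=0$ on the free $E_1$-page one gets $E_2=E_1$ and the induction over the pages closes, after which convergence from Theorem \ref{Lietogroupss} yields the stated isomorphism exactly as you say.
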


The proofs of these theorems follow the outline of Polo and Tilouine's work (\cite{PT}, section 3): we filter the completed group algebra $\Zp[[N]]$ by powers of the augmentation ideal, show that the associated graded algebra is isomorphic to the enveloping algebra of $\mathfrak n$, and then prove that the filtered complex spectral sequences in theorem \ref{Lietogroupss} collapses on the first page, giving us the isomorphism of theorem \ref{groupLiecomparison}.
%

\subsection{Notation} \label{notationsection}
We collect in this section some notation and terminology used throughout the rest of the paper.

Let $\mathrm G$ be a connected, reductive, split $F$-group. We fix an $\OO_F$-model of $\mathrm G$ as in theorem 1.2 of \cite{Conrad2}, due to Demazure and Gabriel. This $\OO_F$-model (still often denoted $\mathrm G$) is a smooth, affine group scheme with connected, reductive fibers.

Since $\mathrm G$ is split (being a Chevalley group scheme of a split $F$-group), we can choose a maximal $\OO_F$-torus in it which is fiberwise split (see example 3.1 of \cite{Conrad2}), and we denote it by $\mathrm T$. Its generic fiber $\mathrm T_F$ is a maximal, split $F$-torus in $\mathrm G_F$ and all maximal, split $F$-tori of $\mathrm G_F$ are $F$-conjugate to it.

We consider next the roots $\Phi( \mathrm G, \mathrm T)$ of $\mathrm T$ on $\mathrm G$ (see sections 3-5 of \cite{Conrad} for the theory of root data over $\OO_F$-reductive groups).
Fix a cocharacter $\lambda \in \Hom_{\OO_F} \left( \mathbb G_m, \mathrm T \right)$ not annihilated by any root in $\Phi \left( \mathrm G, \mathrm T \right)$ - this defines a Borel subgroup $\mathrm B = \mathrm P_{\mathrm G}(\lambda)$ over $\OO_F$ (we use the dynamical description of parabolic subgroups as in \cite{Conrad}, see in particular sections 4.1 and 5.2), as in example 3.1 of \cite{Conrad2}.

As mentioned in the introduction, we will often abuse notation and suppress the fiber-subscript, so $\mathrm G$, $\mathrm B$ and $\mathrm T$ will be used to denote both the $\OO_F$-models and their generic fibers, the meaning being clear from context.

Our choice of the Borel subgroup $\mathrm B$ determines a basis of simple roots $\Delta = \{ \alpha_1, \ldots, \alpha_r \}$ as well as a choice of positive roots $\Phi^+ = \Phi^+(\mathrm G, \mathrm T)$ inside $\Phi(\mathrm G,\mathrm T)$.

For each positive root $\alpha \in \Phi^+(\mathrm G, \mathrm T)$, we define the height of $\alpha$ as \[ h(\alpha) = \sum_i n_i \textnormal{ where } \alpha = \sum_i n_i \alpha_i. \]

We denote by $\rho$ the half-sum of the positive roots $\Phi^+ $.
If $\mathrm G$ is simply connected then $\rho \in X^* \left( \mathrm T \right)$.
In this instance, the height $h(\alpha)$ is also equal to $\langle \alpha^{\vee}, \rho \rangle$ by proposition 29, section 1 of \cite{Bourbaki}, where ${\langle \cdot , \cdot \rangle: X_*(\mathrm T) \times X^* (\mathrm T) \lra \Z}$ is the usual perfect pairing between cocharacter and character lattice.

The Coxeter number of $\mathrm G$ is the maximum $h$ of $\langle \alpha^{\vee}, \rho \rangle +1$ as $\alpha$ varies among the positive roots.

We fix an ordering of the positive roots such that the height is non-decreasing, we have then $\Phi^+ = \{ \alpha_1, \ldots, \alpha_n\}$.
The Weyl group $W = N_{\mathrm G}(\mathrm T) / \mathrm T$ is generated by the simple reflections $\{ s_{\alpha} \}_{\alpha \in \Delta}$.
This gives the notion of length of a Weyl element $w \in W$: the smallest possible number of simple reflections needed to write $w$ as a word in the $s_{\alpha}$'s.

Recall the dot action of the Weyl group $W$ on the character lattice: \[ w \cdot \lambda = w(\lambda + \rho ) - \rho \qquad \forall w \in W, \lambda \in X^*(\mathrm T). \]

For each root $\alpha \in \Phi^+$ we have the associated root group $\mathrm U_{\alpha}$, an $\OO_F$-group scheme isomorphic to $\mathbb G_a$, and we fix isomorphisms $\theta_{\alpha}: \mathbb G_a \lra \mathrm U_{\alpha}$ giving rise to a Chevalley system, as explained for instance in proposition 6.3.4 and remark 6.3.5 of \cite{Conrad}.

As theorem 5.1.13 in \cite{Conrad} explains, the multiplication map is then an isomorphism of $\OO_F$-schemes: \begin{equation} \label{multiplicationisom} \prod_{\alpha \in \Phi^+} \mathrm U_{\alpha} \lra \mathrm U \end{equation} where the product of the root groups is ordered accordingly to the ordering of $\Phi^+$ we have fixed.

Denote $\mathfrak u_{\alpha} = \Lie \mathrm U_{\alpha}$ the $\OO_F$-Lie algebra of the root group $\mathrm U_{\alpha}$.

For technical purposes, it is convenient to bring down our setup to $\Zp$.
It is known (see for instance \cite{oesterle}, proposition A.3.7) that Weil restriction along the finite, free map $\Zp \hookrightarrow \OO_F$ preserves split unipotent group schemes.
Denote then $\mathrm U' = \Res{\OO_F}{\Zp}{\mathrm U}$ the Weil restriction of $\mathrm U$ to $\Zp$ (a split, unipotent group scheme over $\Zp$) and $\mathfrak n = \Res{\OO_F}{\Zp}{\mathfrak u}$ its $\Zp$-Lie algebra.\footnote{Our notion for Weil restriction of Lie algebras follows that of Oesterle in \cite{oesterle}, proposition A.3.3: of the $\OO_F$-module $\mathfrak g$ we only remember its structure as a $\Zp$-module, and the bracket operation is also seen as a $\Zp$-bilinear map.}
Similarly, for each root $\alpha$ we let $\mathrm U'_{\alpha} = \Res{\OO_F}{\Zp}{\mathrm U_{\alpha}}$ and $\mathfrak n_{\alpha} = \Res{\OO_F}{\Zp}{\mathfrak u_{\alpha}}$ be the Weil restrictions of the root group corresponding to $\alpha$ and its Lie algebra.
The conjugation action of $\mathrm T$ on $\mathrm U$ and the induced action on $\mathfrak u$ are algebraic, and thus can be brought down to $\Zp$ to get algebraic actions of the (non-split) torus $\Res{\OO_F}{\Zp} \mathrm T$ on $\mathrm U'$ and $\mathfrak n$ defined over $\Zp$.

Finally, denote by $N = \mathrm U'(\Zp) \cong \mathrm U(\OO_F)$ the integral points of $\mathrm U$, and similarly $N_{\alpha} = \mathrm U'_{\alpha}(\Zp) \cong \mathrm U_{\alpha}(\OO_F)$.

When computing Lie algebra cohomology, we will use a subscript to remind the reader what linear structure we are using: for instance $H^*_{\Zp} \left( \mathfrak n, - \right)$ is the cohomology of the $\Zp$-Lie algebra $\mathfrak n$ while $H^*_{\OO_F} \left( \mathfrak u, - \right)$ is the cohomology of the $\OO_F$-Lie algebra $\mathfrak u$.

\subsection{Further directions and applications}
This paper arose from the goal of describing $H^* \left( \mathrm U (\OO_F) , \Z / p^n \right)$ as a $\mathrm T(\OO_F)$-module, and in particular when the cohomology groups $H^a \left( \mathrm T(\OO_F), H^b \left( \mathrm U (\OO_F) , \Z / p^n \right) \right)$ vanish.
This understanding was crucial to extend the results of the author's PhD thesis to a larger generality. On the other hand, the results of this paper are independent of the rest of the author's thesis and are thus written separately.

It is clear that the results and the proofs in this paper go through in the more general setup when $\mathrm B$ is replaced by a parabolic subgroup $\mathrm P$, $\mathrm U$ is replaced by the unipotent radical of the parabolic $\mathcal R_u ( \mathrm P)$ and the torus $\mathrm T$ is replaced by a Levi subgroup $\mathrm L \cong \mathrm P / \mathcal R_u(\mathrm P)$.
Indeed, the sources \cite{PT,vigre,FP} work in this greater generality of a parabolic subgroup $\mathrm P$, and the work of Hartley \cite{hartley1,hartley2} can immediately be adapted to a general unipotent radical $\mathcal R_u(\mathrm P)$.
One will then get a description of $H^* \left( \mathcal R_u(\mathrm P) (\OO_F), \Zp \right)$ as a $\mathrm L(\OO_F)$-module. We have not made this result explicit simply because understanding $H^a \left( \mathrm L(\OO_F), H^b \left( \mathcal R_u (\mathrm P) (\OO_F) , \Z / p^n \right) \right)$ is much harder whenever $\mathrm L$ is not a torus, and thus the intended original application of this paper's results to the author's thesis does not follow through.

The sources \cite{PT,vigre,FP,GK} deal with the cohomology of $\mathrm U$ with coefficients in a particular class of highest weight modules (and not just trivial coefficients).
It would be interesting to pursue this line of investigation, and understand how much of the present paper can be carried through to this more general setup of coefficients in a highest weight module with nontrivial $\mathrm U$-action.
While Kostant's theorem \ref{kostantthm} and the results of section \ref{finalsection} (in particular the existence of the equivariant spectral sequence of theorem \ref{Lietogroupss}) should be amenable to this general setup, it is not immediately clear how to replace the step in the proof of corollary \ref{kostantcor} that uses Kunneth's theorem. \\

The interest in the cohomology of analytic $p$-adic groups, for example the ones considered in this paper, has spiked in recent years due to their connection to various area of representation theory and number theory, such as $\mod p$ and $p$-adic representation theory of reductive $p$-adic groups.

One potential application of the computations in this paper is the following: in \cite{GK}, Grosse-Klonne studies universal modules $M_{\chi} (V)$ for $\bmod p$ spherical Hecke algebras $\mathcal H(G,K,V)$ of $p$-adic groups.
Among other results, he describes some sufficient conditions for the freeness of the universal modules: these conditions are precisely expressed in terms of the cohomology $H^* \left( N, V \otimes_{\Zp} \FFp \right)$ (see proposition 6.4 and theorem 8.2 in \cite{GK}, which consider the special case $F = \Qp$).
It is conceivable that by combining corollary \ref{kostantcor} and theorem \ref{groupLiecomparison} one gets a description of $H^*(N, \Zp)$ (and more generally of $H^*(N,V)$ for a special class of highest weight modules $V$, if the technical obstacle explained above can be removed) and hence of $H^*(N, \FFp)$ (resp. $H^*(N, V \otimes_{\Zp} \FFp )$), explicit enough to extend the aforementioned results in \cite{GK} to the case of a general $p$-adic field $F$.

The freeness of the universal module over its Hecke algebra would have many important consequences, for instance the existence of supersingular representations for $\mathrm G(F)$ over $\FFp$ (see remark 5b in the introduction of \cite{GK}). This was an open problem for a general $p$-adic reductive group in characteristic $p$ and has only very recently been settled by Vigneras in \cite{vigneras} besides a few special cases. Vigneras' approach is through a detailed and careful study of Hecke algebras via their presentations, and it would be interesting to have a different proof of existence.

\subsection{Acknowledgments}
This paper owes a debt of gratitude to my advisor Akshay Venkatesh, who encouraged me to pursue this project to extend my thesis's results to a larger generality and suggested the correct strategy to approach the problem. I also want to thank Nivedita Bhaskhar, Rita Fioresi and Mihalis Savvas for helpful conversations, and an anonymous referee for suggesting some improvements.

\section{Lie algebra cohomology}
In this section we compute the Lie algebra cohomology of $\mathfrak u$ and of $\mathfrak n$ with coefficients in $\OO_F$ and $\Zp$ respectively. This is based on a version of Kostant's theorem for unipotent $\OO_F$-Lie algebras very similar to those proved by Polo and Tilouine in \cite{PT} and by the authors of \cite{vigre}.

\subsection{Reduction steps} \label{reductionsteps}
We start with some reductions: we are interested in the $\Res{\OO_F}{\Zp} \mathrm T(\Zp) = \mathrm T(\OO_F)$-action on $H^*_{\Zp} (\mathfrak n, \Zp)$.
Since the action is via conjugation, it certainly factors through the center $\mathrm Z_{\mathrm G}$.
Moreover, when we pass from $\mathrm G$ to its adjoint quotient $\mathrm G / \mathrm Z_{\mathrm G}$, the unipotent radical maps isomorphically onto its image, so the same is true for its Lie algebra.
We can thus assume that $\mathrm G$ is semisimple and adjoint.
In fact, the same reasoning allows us to consider any element of the central isogeny class of $\mathrm G$, since maximal tori also correspond to one another under the central isogeny $\mathrm G^{\mathrm {sc}} \lra \mathrm G^{\ad}$. For example, we could equivalently assume that $\mathrm G$ is semisimple and simply connected.

Recall that a connected, reductive $F$-group is said to be \emph{simple} if it does not have any nontrivial smooth proper connected normal subgroup.
As explained in corollary 10.1.3 of \cite{CF}, the multiplication map \[ \prod_i \mathrm G_i \lra \mathrm G \] from the product of the simple factors of $\mathrm G$ to $\mathrm G$ itself is a central isogeny, and it is an isomorphism if $\mathrm G$ is simply connected or adjoint (which we are free to assume).
Moreover, if $\mathrm G$ is simply connected (resp. adjoint), then each of the $\mathrm G_i$ is also simply connected (resp. adjoint), and maximal split tori, Borel subgroups and their unipotent radicals correspond under the product map.

In particular, $\prod \mathrm T_i \cong \mathrm T$ where each $\mathrm T_i$ is a maximal split torus of $\mathrm G_i$ and $\prod \mathrm U_i \cong \mathrm U$ where $\mathrm U_i$ is the unipotent radical of the Borel subgroup $\mathrm B_i = \mathrm G_i \cap \mathrm B$ of $\mathrm G_i$.

For the Lie algebras, we obtain that $\prod_i \mathfrak u_i \cong \mathfrak u$ where the isomorphism is equivariant for the action of $\prod \mathrm T_i(\OO_F) \cong \mathrm T(\OO_F)$ - the same clearly holds for the Weil restrictions $\prod_i \mathfrak n_i \cong \mathfrak n$ with equivariant $\prod \Res{\OO_F}{\Zp} \mathrm T_i (\Zp) \cong \Res{\OO_F}{\Zp} \mathrm T (\Zp)$-actions.
The Kunneth formula gives then \[ H^n_{\Zp} \left( \mathfrak n, \Zp \right) \cong \bigoplus_{\sum k_i = n} \bigotimes_i H^{k_i}_{\Zp} \left( \mathfrak n_i, \Zp \right), \] where obviously the actions of $\Res{\OO_F}{\Zp} \mathrm T(\Zp)$ on the left and of $\prod \Res{\OO_F}{\Zp} \mathrm T_i(\Zp)$ on the right correspond.
This shows that for the purpose of understanding the $\Res{\OO_F}{\Zp} \mathrm T(\Zp)$-action on $H^*_{\Zp} \left( \mathfrak n , \Zp \right)$ we can assume that $\mathrm G$ is simple.

Moreover, the Kunneth formula also gives \[ H^n \left( N, \Zp \right) \cong \bigoplus_{\sum k_i = n} \bigotimes_i H^{k_i} \left( N_i, \Zp \right) \] where we denote $N_i = \Res{\OO_F}{\Zp}{ \mathrm U_i} (\Zp) = \mathrm U_i(\OO_F)$, and again the isomorphism is equivariant for the action of $\Res{\OO_F}{\Zp} \mathrm T(\Zp) \cong \prod \Res{\OO_F}{\Zp} \mathrm T_i(\Zp)$.

Therefore, it suffices to prove theorem \ref{groupLiecomparison} in case of $\mathrm G$ simple and simply connected, and the case of general connected reductive split $\mathrm G$ follows from the above reasoning.

\begin{lem} \label{Liecohomologybasechange} Let $\mathfrak g$ be a $\Zp$-Lie algebra. Let $F / \Qp$ be a finite extension. Denote $\mathfrak g_{\OO_F} = \mathfrak g \otimes_{\Zp} \OO_F$ the `base change' of $\mathfrak g$ to $\OO_F$.
Let $V$ be a $\mathfrak g$-module which is a finite, free $\Zp$-module.
Then \[ H^*_{\OO_F} \left( \mathfrak g_{\OO_F}, V \otimes_{\Zp} \OO_F \right) \cong H^*_{\Zp} \left( \mathfrak g, V \right) \otimes_{\Zp} \OO_F. \]
In particular, if $F / \Qp$ is Galois, then \[ H^*_{\OO_F} \left( \mathfrak g_{\OO_F}, V \otimes_{\Zp} \OO_F \right)^{\Gal(F / \Qp)} \cong H^*_{\Zp} \left( \mathfrak g, V \right). \]
\end{lem}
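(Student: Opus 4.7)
The plan is to identify the Chevalley--Eilenberg complexes on both sides via base change along the flat extension $\Zp \hookrightarrow \OO_F$, then pass to cohomology.

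First I would recall that $H^*_{\Zp}(\mathfrak g, V)$ is computed by the Chevalley--Eilenberg complex with terms $C^n_{\Zp}(\mathfrak g, V) = \Hom_{\Zp}(\Lambda^n_{\Zp} \mathfrak g, V)$ and differential given by the usual universal formula in the bracket of $\mathfrak g$ and the action on $V$; likewise $H^*_{\OO_F}(\mathfrak g_{\OO_F}, V \otimes_{\Zp} \OO_F)$ is computed by the analogous complex over $\OO_F$. Since $\OO_F$ is free of finite rank over $\Zp$, and $\mathfrak g, V$ are finite free over $\Zp$ (as needed to make the identification sensible — this is the case in our setup for $\mathfrak g = \mathfrak n$ and $V = \Zp$), the canonical maps
\[ \Lambda^n_{\Zp} \mathfrak g \otimes_{\Zp} \OO_F \xrightarrow{\sim} \Lambda^n_{\OO_F} \mathfrak g_{\OO_F}, \qquad \Hom_{\Zp}(A, V) \otimes_{\Zp} \OO_F \xrightarrow{\sim} \Hom_{\OO_F}(A \otimes \OO_F, V \otimes \OO_F) \]
(for $A$ finite free over $\Zp$) assemble into an isomorphism of complexes
\[ C^*_{\Zp}(\mathfrak g, V) \otimes_{\Zp} \OO_F \xrightarrow{\sim} C^*_{\OO_F}(\mathfrak g_{\OO_F}, V \otimes_{\Zp} \OO_F). \]
The only calculation to check here is that the Chevalley--Eilenberg differential base changes correctly, which is immediate because it is given by a universal polynomial in the structure constants of the bracket and the action, both of which are $\OO_F$-linearly extended from their $\Zp$-definitions.

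Next, using that $\OO_F$ is flat over $\Zp$, taking cohomology commutes with tensoring by $\OO_F$, so
\[ H^n_{\OO_F}(\mathfrak g_{\OO_F}, V \otimes_{\Zp} \OO_F) \cong H^n(C^*_{\Zp}(\mathfrak g, V)) \otimes_{\Zp} \OO_F \cong H^n_{\Zp}(\mathfrak g, V) \otimes_{\Zp} \OO_F, \]
proving the first claim. For the Galois part, the Galois group $\Gal(F/\Qp)$ acts on everything solely through the $\OO_F$-tensor factor, and the isomorphism just produced is manifestly $\Gal(F/\Qp)$-equivariant. It remains to verify that for a $\Zp$-module $M$, the natural map $M \to (M \otimes_{\Zp} \OO_F)^{\Gal(F/\Qp)}$ is an isomorphism. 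This is clear when $M$ is finite free over $\Zp$, by choosing a basis and reducing to the case $M = \Zp$, which becomes the standard identity $\OO_F^{\Gal(F/\Qp)} = \Zp$.

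The mildly subtle point — the place where I would be most careful — is the last step: passing Galois invariants through the tensor product requires that $H^*_{\Zp}(\mathfrak g, V)$ be torsion-free (finite free suffices). In the applications of the lemma later in the paper this is automatic (the conclusion of Theorem~\ref{kostantthm} and Corollary~\ref{kostantcor} gives the relevant cohomology as a finite free $\Zp$-module), so I would simply flag the finite-freeness hypothesis where needed rather than attempt to prove the Galois-descent identity for arbitrary torsion $\Zp$-modules, which in general can fail for wildly ramified $F/\Qp$.
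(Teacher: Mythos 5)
Your proposal is correct and follows essentially the same route as the paper: identify the Chevalley--Eilenberg complexes via base change along the finite free extension $\Zp \hookrightarrow \OO_F$ (using freeness of $\bigwedge^n \mathfrak g$), deduce $H^*_{\OO_F}(\mathfrak g_{\OO_F}, V\otimes_{\Zp}\OO_F) \cong H^*_{\Zp}(\mathfrak g,V)\otimes_{\Zp}\OO_F$ by flatness/universal coefficients, and then take Galois invariants through the equivariant isomorphism. Your closing caveat — that the final invariants step really uses torsion-freeness of $H^*_{\Zp}(\mathfrak g,V)$, which is automatic in the paper's applications since the first isomorphism exhibits it inside a finite free $\OO_F$-module — is a fair sharpening of the paper's terse ``the second claim follows immediately.''
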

\begin{proof} Recall that for any free $\OO_F$-module $W$, the cohomology of $\mathfrak g_{\OO_F}$ with coefficients in $W$ is defined as the homology of the complex \[ C^{\bullet} (\mathfrak g_{\OO_F}, W) : = \Hom_{\OO_F} \left( \bigwedge^{\bullet} \mathfrak g_{\OO_F}, W \right). \]
The differential is defined as \[ df (x_0, \ldots, x_q) = \sum_{0 \le i < j \le q} (-1)^{i+j} f \left( [x_i, x_j], x_0, \ldots, \hat {x_i}, \ldots, \hat {x_j}, \ldots, x_q \right) + \] \[ + \sum_{i=0}^q (-1)^i x_i. f \left( x_0, \ldots, \hat{x_i}, \ldots, x_q \right). \]
Since every $\bigwedge^n \mathfrak g$ is a free $\Zp$-module, we have natural isomorphisms \[ \Hom_{\Zp} \left( \bigwedge^n \mathfrak g, V \right) \otimes_{\Zp} \OO_F \cong \Hom_{\OO_F} \left( \left( \bigwedge^n \mathfrak g \right) \otimes_{\Zp} \OO_F, V \otimes_{\Zp} \OO_F \right), \] where we also notice that since $\OO_F$ is a free $\Zp$-module, we have $\left( \bigwedge^n \mathfrak g \right) \otimes_{\Zp} \OO_F \cong \bigwedge^n \left( \mathfrak g_{\OO_F} \right)$ in a natural way.

In fact, these isomorphisms are compatible with the differentials, where on the left hand side we have $d \otimes \id_{\OO_F}$ and on the right hand side $f \otimes x \mapsto df \otimes x$, so that we obtain an isomorphisms of complexes \begin{equation} \label{homtensor} C^{\bullet} ( \mathfrak g, V) \otimes_{\Zp} \OO_F \cong C^{\bullet} \left( \mathfrak g_{\OO_F}, V \otimes_{\Zp} \OO_F \right) \end{equation}

Using that $\OO_F$ is finite, free as a $\Zp$-module, and hence flat, the universal coefficient theorem as in \cite{rotman} (corollary 7.56) guarantees that we obtain isomorphisms \begin{equation} \label{basechangeLiecohomology} H^n_{\Zp} \left( \mathfrak g, V \right) \otimes_{\Zp} \OO_F \cong H^n_{\OO_F} \left( \mathfrak g_{\OO_F}, V \otimes_{\Zp} \OO_F \right) \end{equation} as the relevant $\Tor_1^{\Zp} \left( H^{n-1}(\mathfrak g, V), \OO_F \right)$ group is always zero.
This proves the first part of the claim.

Suppose now that $F / \Qp$ is Galois. $\Gal (F / \Qp)$ acts on the complex \[ \Hom_{\Zp} \left( \bigwedge^{\bullet} \mathfrak g, V \right) \otimes_{\Zp} \OO_F \] on the second factor, and through the isomorphism of complexes in formula \ref{homtensor} we obtain a Galois action on $\Hom_{\OO_F} \left( \bigwedge^{\bullet} \mathfrak g_{\OO_F} , V \otimes_{\Zp} \OO_F \right)$ defined as \[ (\gamma.f)(x_1, \ldots, x_q) : = \gamma \left( f \left( \gamma^{-1}.x_1, \ldots, \gamma^{-1}.x_q \right) \right) \qquad \forall \gamma \in \Gal(F/ \Qp). \]
Since the Galois action on $\Hom_{\Zp} \left( \bigwedge^{\bullet} \mathfrak g, V \right) \otimes_{\Zp} \OO_F$ obviously commutes with the differential, the same holds for this Galois action on $\Hom_{\OO_F} \left( \bigwedge^{\bullet} \mathfrak g_{\OO_F} , V \otimes_{\Zp} \OO_F \right)$ and thus it descends to an action on the cohomology $H^*_{\OO_F} \left( \mathfrak g_{\OO_F}, V \otimes_{\Zp} \OO_F \right)$.

In particular, the isomorphism in formula \ref{basechangeLiecohomology} respects the Galois action, where on the left hand side this Galois action is only on the second tensor factor. The second claim of the lemma follows immediately.
\end{proof}

\subsection{Kostant's theorem} \label{kostantsubsection}
Our goal in this section is to prove theorem \ref{kostantthm} which we recall for the reader's convenience.
\kostant*

We start building towards the proof of this theorem by doing some reductions as well as introducing some technical tools. We follow the ideas in \cite{vigre}, with considerable simplifications since we are only considering the case of the unipotent radical of a Borel subgroup, and not of a general parabolic subgroup.

By the classification of simple algebraic groups and work of Chevalley, we know that $\mathrm G$, as well as $\mathfrak g$ and $\mathfrak u$, have $\Zp$-models since their structure constants can be defined over $\Z$.
For all highest weight $\mathrm T$-modules we have \[ V_{\OO_F} (w \cdot 0) \cong V_{\Zp} (w \cdot 0) \otimes_{\Zp} \OO_F, \] hence it is clear thanks to lemma \ref{Liecohomologybasechange} that it suffices to prove the theorem for $F = \Qp$.

Fix a $\Zp$-basis of weight vectors $\{ x_{\alpha} \}_{\alpha \in \Phi^+}$ for $\mathfrak u$, and let $\{ f_{\alpha} \}_{\alpha \in \Phi^+}$ be the dual $\Zp$-basis of $\mathfrak u^*$.
Notice that the cohomology of $\mathfrak u$ can equivalently be computed by the complex $\bigwedge^{\bullet} (\mathfrak u^*)$, and that a basis of $\bigwedge^k (\mathfrak u^*)$ is given by \[ f_{\underline{\alpha}} = f_{\alpha_1} \wedge \ldots \wedge f_{\alpha_k} \] as $\alpha_1, \ldots, \alpha_k$ vary in $\Phi^+$.

Given a subset $\Psi \subset \Phi^+$, we define \[ \langle \Psi \rangle := \sum_{\beta \in \Psi} \beta \] and for $w \in W$ we define \[ \Phi(w) = w \Phi^- \cap \Phi^+. \]
\begin{lem} \label{rootlemma} Let $w \in W$. \begin{enumerate}
    \item The size of $\Phi(w)$ is the length $l(w)$.
    \item $w \cdot 0 = - \langle \Phi(w) \rangle$.
    \item If $w \cdot 0 = - \langle \Psi \rangle$ for some $\Psi \subset \Phi^+$, then $\Psi = \Phi(w)$.
\end{enumerate}
\end{lem}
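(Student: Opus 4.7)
The plan is to treat the three items in sequence, each building on standard Weyl-group bookkeeping. All three assertions are about the combinatorics of the set $\Phi(w) = w\Phi^- \cap \Phi^+$, which can be rewritten more transparently as $\Phi(w) = \{\alpha \in \Phi^+ : w^{-1}\alpha \in \Phi^-\}$.

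For part (1), I would invoke the classical identity $\ell(w) = |\{\alpha \in \Phi^+ : w^{-1}\alpha \in \Phi^-\}|$, which (given the length defined in section \ref{notationsection} via simple reflections) follows from the strong exchange property and can be proved by induction on $\ell(w)$ using that $\ell(ws_\alpha) = \ell(w)+1$ iff $w\alpha \in \Phi^+$. Since $|\Phi(w)| = \ell(w^{-1}) = \ell(w)$, the claim drops out.

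For part (2), I would compute $2(w\rho - \rho)$ directly. Writing $2\rho = \sum_{\alpha \in \Phi^+} \alpha$ and using that $w$ permutes $\Phi$, partition $w\Phi^+ = (w\Phi^+ \cap \Phi^+) \sqcup (w\Phi^+ \cap \Phi^-)$ and $\Phi^+ = (w\Phi^+ \cap \Phi^+) \sqcup \Phi(w)$. The sums over $w\Phi^+ \cap \Phi^+$ cancel, leaving
\[ 2w\rho - 2\rho \;=\; \sum_{\beta \in w\Phi^+ \cap \Phi^-} \beta \;-\; \sum_{\alpha \in \Phi(w)} \alpha. \]
The substitution $\alpha \mapsto -\alpha$ identifies $w\Phi^+ \cap \Phi^-$ with $-(w\Phi^- \cap \Phi^+) = -\Phi(w)$, so the first sum equals $-\langle \Phi(w)\rangle$ and the total is $-2\langle \Phi(w)\rangle$. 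Recalling $w\cdot 0 = w\rho - \rho$ finishes part (2).

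For part (3), suppose $\langle \Psi\rangle = \langle \Phi(w)\rangle$ and set $A = \Phi(w)\setminus\Psi$ and $B = \Psi\setminus\Phi(w)$, disjoint subsets of $\Phi^+$ with $\langle A\rangle = \langle B\rangle$. By the defining property of $\Phi(w)$, every $\alpha \in A$ satisfies $w^{-1}\alpha \in \Phi^-$, while every $\beta \in B \subset \Phi^+ \setminus \Phi(w)$ satisfies $w^{-1}\beta \in \Phi^+$. Apply $w^{-1}$ to the identity: $w^{-1}\langle A\rangle$ is a sum of distinct negative roots and equals $w^{-1}\langle B\rangle$, a sum of distinct positive roots. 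Since $\Phi^+$ lies in the open positive cone spanned by $\Delta$ (and $\Phi^-$ in the negative cone), a sum of distinct roots all having the same sign vanishes only when the sum is empty. Hence $A = B = \emptyset$ and $\Psi = \Phi(w)$. The main subtle point is this last step, where one must use linear independence in the monoid spanned by the simple roots to conclude that a non-empty sum of distinct positive (resp. negative) roots cannot be zero; everything else is routine manipulation of the $W$-action on $\Phi$.
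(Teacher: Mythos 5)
Your three arguments are all correct. The difference from the paper is that the paper does not argue at all: it simply cites corollary 1.7 of \cite{humphreys} for (1), proposition 3.19 of \cite{knapp} for (2), and lemma 3.1.2 of \cite{vigre} for (3), whereas you supply self-contained proofs, which are essentially the standard arguments underlying those citations. Your part (1) is the usual inversion-set characterization of length (using $\ell(w^{-1})=\ell(w)$ to pass between $w\Phi^-\cap\Phi^+$ and the more common $\{\alpha\in\Phi^+: w\alpha\in\Phi^-\}$); your part (2) is the classical computation $\rho-w\rho=\langle\Phi(w)\rangle$ via cancelling $w\Phi^+\cap\Phi^+$ (the factor of $2$ is harmless since $X^*(\mathrm T)$ is torsion-free, so one may divide by $2$ after working in $X^*(\mathrm T)\otimes\Q$); and your part (3) is the argument behind the VIGRE lemma. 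Two cosmetic points in (3): a positive root lies in the \emph{closed} nonnegative cone on $\Delta$ with at least one strictly positive coefficient, not in the open cone, and you should make explicit the one-line step that since $w^{-1}\langle A\rangle=w^{-1}\langle B\rangle$ has simultaneously nonpositive and nonnegative coordinates in the basis $\Delta$, the common value is $0$, whence both sums are empty. With those phrasings tightened, your write-up could replace the paper's citations by a complete proof; the paper's approach buys brevity, yours buys self-containedness.
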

\begin{proof} Part 1 is corollary 1.7 in \cite{humphreys}, part 2 is proposition 3.19 in \cite{knapp} and part 3 is lemma 3.1.2 in \cite{vigre}.
\end{proof}

We make explicit the following result, explained in section 3.2 of \cite{vigre}. Let $w \in W$, so that by lemma \ref{rootlemma} $w \cdot 0 = - \langle \Phi(w) \rangle$ and $\Phi(w)$ is the unique subset of positive roots satisfying that equality. Enumerate $\Phi(w) = \{ \beta_1, \ldots, \beta_n \}$.
\begin{lem} \label{highestweightinkostant} The vector \[ f_{\Phi(w)} : = f_{\beta_1} \wedge \ldots \wedge f_{\beta_n} \] has weight $w \cdot 0$ in $\bigwedge^n \left( \mathfrak u^* \right)$, and in fact spans (over $\Zp$) the relevant eigenspace. Moreover, it descends to a nonzero element of $H^n \left( \mathfrak u, \Zp \right)$ and $n$ is the only degree where the weight $w \cdot 0$ appears.
\end{lem}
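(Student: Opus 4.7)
The plan is to exploit the fact that the $\mathrm T$-action on $\bigwedge^\bullet(\mathfrak u^*)$ makes the whole Chevalley--Eilenberg complex into a complex of $\mathrm T$-modules, so the differential preserves weights; combined with the uniqueness part of Lemma \ref{rootlemma}, this will pin down the $(w\cdot 0)$-weight space in every degree.

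First I would record that each $f_\alpha$ carries $\mathrm T$-weight $-\alpha$ (as the dual basis vector to the root vector $x_\alpha$ under the contragredient action), so a basis element $f_\Psi := f_{\alpha_1}\wedge\cdots\wedge f_{\alpha_k}$ attached to a $k$-element subset $\Psi = \{\alpha_1,\ldots,\alpha_k\}\subset \Phi^+$ has weight $-\langle \Psi\rangle$. Applying this to $\Psi = \Phi(w)$ and using part 2 of Lemma \ref{rootlemma} immediately yields that $f_{\Phi(w)}$ has weight $-\langle \Phi(w)\rangle = w\cdot 0$. Because the $f_\Psi$ form a $\Z_p$-basis, the $(w\cdot 0)$-weight space of $\bigwedge^k(\mathfrak u^*)$ is free on those $\Psi$ of size $k$ satisfying $-\langle\Psi\rangle = w\cdot 0$. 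By the uniqueness assertion in part 3 of Lemma \ref{rootlemma}, the only such $\Psi$ is $\Phi(w)$, which has cardinality $n = l(w)$ by part 1. This simultaneously delivers the weight computation, the spanning statement (the $(w\cdot 0)$-eigenspace in degree $n$ is $\Z_p\cdot f_{\Phi(w)}$), and the last claim that $n$ is the only degree in which $w\cdot 0$ occurs.

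It remains to check that $f_{\Phi(w)}$ survives to a nonzero class in cohomology. Since the differential is $\mathrm T$-equivariant, it sends the $(w\cdot 0)$-weight space in degree $n$ into the $(w\cdot 0)$-weight space in degree $n+1$, which vanishes by the previous step; hence $df_{\Phi(w)} = 0$. Similarly, the $(w\cdot 0)$-weight space in degree $n-1$ vanishes, so $f_{\Phi(w)}$ cannot be a coboundary. Its cohomology class is therefore nonzero, and in fact generates the $(w\cdot 0)$-isotypic part of $H^n(\mathfrak u,\Z_p)$.

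I do not anticipate any serious obstacle here: the statement is essentially a bookkeeping corollary of Lemma \ref{rootlemma}, with the only delicate points being the passage from $x_\alpha$ of weight $\alpha$ to $f_\alpha$ of weight $-\alpha$, and the observation that torus-equivariance forces the differential in and out of degree $n$ to vanish on the single line spanned by $f_{\Phi(w)}$.
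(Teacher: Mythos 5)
Your proposal is correct and follows essentially the same route as the paper: compute the weight of $f_{\Phi(w)}$ as $-\langle \Phi(w)\rangle = w\cdot 0$, use parts 1 and 3 of Lemma \ref{rootlemma} to see this weight occurs only in degree $n=l(w)$ and only on the line spanned by $f_{\Phi(w)}$, and then use $\mathrm T$-equivariance of the differential to conclude it is a cocycle and not a coboundary. No gaps to report.
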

\begin{proof}
It is clear that $f_{\Phi(w)}$ has weight exactly $ - \langle \Phi(w) \rangle = w \cdot 0$, where the minus sign appears because $\mathrm T$ is acting on the dual Lie algebra.
Any weight in $\bigwedge^{\bullet} \left( \mathfrak u^* \right)$ has to be a sum of negative roots, so lemma \ref{rootlemma} guarantees that if $w \cdot 0$ appears as a weight, such a sum should be $- \langle \Phi(w) \rangle$, which on the other hand can only appear in degree $| \Phi(w)| = l(w)$ (by lemma \ref{rootlemma}). Finally, it is clear thanks to the alternating property that any other `arrangement' of the roots $\{ \beta_1, \ldots, \beta_n \} = \Phi(w)$ to create a vector in $\bigwedge^n \left( \mathfrak u^* \right)$ has to be a multiple of $f_{\Phi(w)}$.

Now, since the differentials in the complex $\bigwedge^{\bullet} \left( \mathfrak u^* \right)$ are $\mathrm T$-equivariant, they preserve the weights and hence $f_{\Phi(w)}$ must descend to a cohomology class in $H^n \left( \mathfrak u, \Zp \right)$ - indeed the weight $w \cdot 0$ does not appear in $\bigwedge^{n+1} \left( \mathfrak u^* \right)$ and hence $f_{\Phi(w)}$ is a cocycle, but the weight $w \cdot 0$ does not appear in $\bigwedge^{n-1} \left( \mathfrak u^* \right)$ and hence $f_{\Phi(w)}$ is not a coboundary.

The fact that the weight $w \cdot 0$ only appears in degree $n$ in $H^*_{\Zp} \left( \mathfrak u, \Zp \right)$ is a consequence of the same fact for the complex $\bigwedge^{\bullet} \left( \mathfrak u^* \right)$, since the Lie algebra cohomology $H^*_{\Zp}(\mathfrak u, \Zp)$ is a $\mathrm T$-equivariant subquotient of it.
\end{proof}

\begin{proof}[Proof of theorem \ref{kostantthm}]
Lemma \ref{highestweightinkostant} shows that we have an injection \[ i: \bigoplus_{w \in W, l(w) = n} V_{\Zp}(w \cdot 0) \hookrightarrow H^n_{\Zp}( \mathfrak u, \Zp) \] of a finite, free $\Zp$-module into a finitely generated $\Zp$-module.

To show that this injection is an equality, it suffices to prove that $i \otimes_{\Zp} \overline{\Qp}$ and $i \otimes_{\Zp} \FFp$ are isomorphisms.

Now, theorem 4.1.1 in \cite{vigre} constructs the same injection for $\C$ and $\FFp$-coefficients, and proves that the injection is indeed an isomorphism.
The proof with $\C$-coefficients works verbatim with $\overline {\Qp}$-coefficients, since it simply uses that $\C$ is an algebraically closed field of characteristic zero.
The universal coefficient theorem (applied in the same fashion as in formula 2.1.2 in \cite{vigre}) yields that \[ H^n \left( \mathfrak u_{\overline {\Qp}}, \overline{\Qp} \right) \cong H^n \left( \mathfrak u, \Zp \right) \otimes_{\Zp} \overline{ \Qp} \] since $\overline \Qp$ is a divisible $\Zp$-module.
%

As for the Weyl modules $V_{\Zp} (\lambda)$, it is immediate that $V_{\Zp} (\lambda) \otimes_{\Zp} \overline {\Qp}$ is the highest weight $\overline {\Qp}$-module of highest weight $\lambda$. This fact and theorem 4.1.1 in loc. cit. show that $i \otimes_{\Zp} \overline \Qp$ is indeed an isomorphism.

It remains to check that $i \otimes_{\Zp} \FFp$ is an isomorphism. Since theorem 4.1.1 is proven with $\FFp$-coefficients, we only need to show that $i \otimes_{\Zp} \FFp$ coincides with the map of theorem 4.1.1 of loc. cit.

For the Weyl modules, it is again obvious that $V_{\Zp}(w \cdot 0) \otimes_{\Zp} \FFp \cong V_{\FFp}(w \cdot 0)$, as both sides are $1$-dimensional $\FFp$-modules on which $\mathrm T(\Zp)$ acts as $w \cdot 0$. 

To show that $H^n \left( \mathfrak u , \Zp \right) \otimes_{\Zp} \FFp \cong H^n \left( \mathfrak u_{\FFp}, \FFp \right)$ it suffices to show that $H^{n-1} \left( \mathfrak u, \Zp \right)$ is a free $\Zp$-module, and then the universal coefficient theorem (\cite{rotman}, corollary 7.56) will yield the required isomorphism.
This is clear by induction on $n$: the base case $n=0$ has simply $H^0_{\Zp} \left( \mathfrak u, \Zp \right) \cong \Zp$, while the inductive step uses that we have proven the theorem for $n-1$, so that $H^{n-1} (\mathfrak u, \Zp) \cong \bigoplus_{w \in W, l(w) =n-1} V_{\Zp} (w \cdot 0)$ is a finite, free $\Zp$-module.
\end{proof}

We can now take Kostant's theorem \ref{kostantthm} as a starting point, and compute the cohomology of $\mathfrak n$, a $\Zp$-Lie algebra, with coefficients in the trivial module $\Zp$. Notice that we cannot immediately apply Kostant's theorem to $\mathfrak n = \Res{\OO_F}{\Zp}{\mathfrak u}$, since this is not (in general) the Lie algebra of the unipotent radical of a Borel subgroup of a split group.

Suppose that $F / \Qp$ is Galois, then lemma \ref{Liecohomologybasechange} gives that \[ H^*_{\Zp} \left( \mathfrak n, \Zp \right) \cong H^*_{\OO_F} \left( \mathfrak n_{\OO_F}, \OO_F \right)^{\Gal(F/ \Qp)}, \] so we aim to compute the right hand side. Because of the technical nature of the following argument, for the sake of clarity we drop the shortcut notation $\mathfrak u$, $\mathfrak n$ and use instead $\Lie \mathrm U$, $\Lie \left( \Res{\OO_F}{\Zp} \mathrm U \right)$ and so on.

As explained in \cite{CGP}, appendix A.7 we have an isomorphism \[ \Lie (\mathrm U) \cong \Lie \left( \Res{\OO_F}{\Zp} \mathrm U \right) \] of $\Zp$-Lie algebras, and we also have (by the discussion after formula 2.1.3 in \cite{CGP}) \[ \Lie \left( \Res{\OO_F}{\Zp} \mathrm U \right) \otimes_{\Zp} \OO_F \cong \Lie \left( \left( \Res{\OO_F}{\Zp} \mathrm U \right) \times_{\Zp} \OO_F \right) \] and since \[ \left( \Res{\OO_F}{\Zp} \mathrm U \right) \times_{\Zp} \OO_F \cong \prod_{\sigma \in \Gal(F/ \Qp)} \mathrm U \times_{\OO_F, \sigma} \OO_F \] we conclude that \[ \Lie \left( \Res{\OO_F}{\Zp} \mathrm U \right) \otimes_{\Zp} \OO_F \cong \prod_{\sigma \in \Gal(F/ \Qp)} \Lie \left( \mathrm U \times_{\OO_F, \sigma} \OO_F \right). \]
These isomorphisms are compatible for the action of \[ \left( \Res{\OO_F}{\Zp} \mathrm T \right) \times_{\Zp} \OO_F \cong \prod_{\sigma \in \Gal(F/ \Qp)} \mathrm T \times_{\OO_F, \sigma} \OO_F, \] in the sense that the left side acts naturally on $\Lie \left( \Res{\OO_F}{\Zp} \mathrm U \times_{\Zp} \OO_F \right)$ and the right side acts componentwise on $\prod_{\sigma \in \Gal(F/ \Qp)} \Lie \left( \mathrm U \times_{\OO_F, \sigma} \OO_F \right)$.

Therefore when taking cohomology, we obtain the isomorphism \[ H^*_{\OO_F} \left( \Lie \left( \Res{\OO_F}{\Zp} \mathrm U \right) \otimes_{\Zp} \OO_F, \OO_F \right) \cong H^*_{\OO_F} \left( \prod_{\sigma \in \Gal(F/ \Qp)} \Lie \left( \mathrm U \times_{\OO_F, \sigma} \OO_F \right), \OO_F \right) \] and by Kunneth's theorem the latter module is isomorphic to \[ \bigotimes_{\sigma \in \Gal(F/ \Qp)} H^*_{\OO_F} \left( \Lie \left( \mathrm U \times_{\OO_F, \sigma} \OO_F \right), \OO_F \right) \] where the action of $\prod_{\sigma \in \Gal(F/ \Qp)} \mathrm T \times_{\OO_F, \sigma} \OO_F$ is again component-wise.

The Galois group $\Gal(F/\Qp)$ acts on both $\Res{\OO_F}{\Zp} \mathrm T \times_{\Zp} \OO_F$ and $\Res{\OO_F}{\Zp} \mathrm U \times_{\Zp} \OO_F$ on the $\OO_F$-factor, compatibly with the conjugation action of the first group on the second (because the conjugation action is algebraic). In particular, this implies that the torus action on the cohomology of $\Lie \left( \Res{\OO_F}{\Zp} \mathrm U \times_{\Zp} \OO_F \right)$ is Galois-semilinear - see also formula \ref{galoistorusequivariance} afterwards.

Therefore, the Galois-fixed submodule of $H^*_{\OO_F} \left( \Lie \left( \Res{\OO_F}{\Zp} \mathrm U \right) \otimes_{\Zp} \OO_F, \OO_F \right)$ - which coincide with $H^*_{\Zp} \left( \Lie \left( \Res{\OO_F}{\Zp} \mathrm U \right), \Zp \right)$ by lemma \ref{Liecohomologybasechange} and the isomorphisms described above - is acted upon by the Galois-fixed subtorus of $\Res{\OO_F}{\Zp} \mathrm T \times_{\Zp} \OO_F$, which is just $\Res{\OO_F}{\Zp} \mathrm T$. In fact, taking Galois-invariants (on both the torus and the cohomology module) recovers the natural action of $\Res{\OO_F}{\Zp} \mathrm T$ on $H^*_{\Zp} \left( \Lie \left( \Res{\OO_F}{\Zp} \mathrm U \right), \Zp \right)$ which is exactly what we aim to compute.

We start by noticing that we can apply Kostant's theorem \ref{kostantthm} to each $\sigma$-twist, and obtain that the $\mathrm T \times_{\OO_F, \sigma} \OO_F$-module $H^{n_{\sigma}}_{\OO_F} \left( \Lie \left( \mathrm U \times_{\OO_F, \sigma} \OO_F \right), \OO_F \right)$ is a direct sum of characters indexed by Weyl-group elements of length $n_{\sigma}$: \[ H^n_{\OO_F} \left( \Lie \left( \mathrm U \times_{\OO_F, \sigma} \OO_F \right), \OO_F \right) \cong \bigoplus_{l(w_{\sigma})=n_{\sigma}} V_{\OO_F} \left( w_{\sigma} \cdot 0 \right) \] where the $\sigma$-subscript simply reminds us which Galois-twist we are looking at. This and the discussion above show, in particular, that $H^*_{\OO_F} \left( \Lie \left( \Res{\OO_F}{\Zp} \mathrm U \right) \otimes_{\Zp} \OO_F, \OO_F \right)$ is a finite, free $\OO_F$-module.

Assume first that $| \Gal(F / \Qp)| = d$ is coprime to $p$. Then the averaging operator \begin{equation} \label{projectionidempotent} e:= \frac{1}{d} \sum_{\gamma \in \Gal(F / \Qp)} \gamma : H^*_{\OO_F} \left( \Lie \left( \Res{\OO_F}{\Zp} \mathrm U \right) \otimes_{\Zp} \OO_F, \OO_F \right) \lra H^*_{\OO_F} \left( \Lie \left( \Res{\OO_F}{\Zp} \mathrm U \right) \otimes_{\Zp} \OO_F, \OO_F \right) \end{equation} is an idempotent projection on the Galois-fixed submodule of $H^*_{\OO_F} \left( \Lie \left( \Res{\OO_F}{\Zp} \mathrm U \right) \otimes_{\Zp} \OO_F, \OO_F \right)$.

The semilinearity of the Galois action means that for each $f \in H^*_{\OO_F} \left( \Lie \left( \mathrm U \times_{\OO_F, \sigma} \OO_F \right), \OO_F \right)$, $t \in \mathrm T \times_{\OO_F, \sigma} \OO_F$ and $\gamma \in \Gal(F/\Qp)$ we have \begin{equation} \label{galoistorusequivariance} \gamma. \left( t.f \right) = \gamma(t). \gamma(f) \end{equation} where $\gamma(t) \in \mathrm T \times_{\OO_F, \gamma \sigma} \OO_F$ and $\gamma(f) \in H^*_{\OO_F} \left( \Lie \left( \mathrm U \times_{\OO_F, \gamma \sigma} \OO_F \right), \OO_F \right)$.

In particular, if $\lambda \in X^*(\mathrm T)$ and $f$ is in the $\lambda$-eigenspace for $\mathrm T \times_{\OO_F, \sigma} \OO_F$ we have that \[ \gamma(t). \gamma(f) = \gamma. \left( t.f \right) = \gamma. \left( \lambda(t) \cdot f \right) = \gamma \left( \lambda(t) \right) \cdot \gamma(f) = \lambda \left( \gamma(t) \right) \gamma(f) \] where the last equality, where we swap $\gamma$ and $\lambda$, holds since $\lambda$ is an algebraic character.
This shows that $\gamma(f)$ is in the $\lambda$-eigenspace for $\mathrm T \times_{\OO_F, \gamma \sigma} \OO_F$.

We compute the Galois-fixed submodule of $H^*_{\OO_F} \left( \Lie \left( \Res{\OO_F}{\Zp} \mathrm U \right) \otimes_{\Zp} \OO_F, \OO_F \right)$ by computing the image of a basis of $H^*_{\OO_F} \left( \Lie \left( \Res{\OO_F}{\Zp} \mathrm U \right) \otimes_{\Zp} \OO_F, \OO_F \right)$ under the projection operator $e$.
Fix then a cohomological degree $n$, as well as \[ f_{\underline w} = \otimes f_{w_{\sigma}} \in \bigotimes_{\sigma \in \Gal(F / \Qp)} \OO_F \left( w_{\sigma} \cdot 0 \right) \subset \bigotimes_{\sigma \in \Gal(F / \Qp)} H^{l(w_{\sigma})}_{\OO_F} \left( \Lie \left( \mathrm U \times_{\OO_F, \sigma} \OO_F \right), \OO_F \right) \] where $\underline w = \left( w_{\sigma} \right)_{\sigma \in \Gal(F/ \Qp)}$ remembers the eigenspace to which $f_{\underline w}$ belongs under the torus action of $\prod_{\sigma \in \Gal(F/ \Qp)} \mathrm T \times_{\OO_F, \sigma} \OO_F$, and $\sum_{\sigma} l(w_{\sigma}) = n$.

Then equation \ref{galoistorusequivariance} and the discussion following it shows that for each Galois element $\gamma$, the cohomology class $\gamma.f$ belongs to the $\prod_{\sigma} \mathrm T \times_{\OO_F, \sigma} \OO_F$-eigenspace where each $\mathrm T \times_{\OO_F, \gamma \sigma} \OO_F$ acts via $w_{\sigma} \cdot 0$ -  that is, letting $t = \left( t_{\sigma} \right)_{\sigma} \in \prod_{\sigma} \mathrm T \times_{\OO_F, \sigma} \OO_F$, we obtain then \begin{equation} \label{extendedtorusaction} t. \left( \gamma.f_{\underline w} \right) = \prod_{\sigma} (w_{\sigma} \cdot 0) \left( t_{\gamma \sigma} \right) ( \gamma.f_{\underline w} ) \end{equation}

Therefore, if we apply the idempotent $e$ we obtain that \[  t. \left( e.f_{\underline w} \right) = t. \left( \frac{1}{d} \sum_{\gamma \in \Gal (F/\Qp)} \gamma.f_{\underline w} \right) = \frac{1}{d} \sum_{\gamma \in \Gal (F / \Qp)}  t. (\gamma.f_{\underline w}) = \frac{1}{d} \sum_{\gamma \in \Gal (F / \Qp)} \left( \prod_{\sigma} (w_{\sigma} \cdot 0) \left( t_{\gamma \sigma} \right) ( \gamma.f ) \right). \] 

We now describe how the Galois-invariant subtorus of $\left( \Res{\OO_F}{\Zp} \mathrm T \right) \times_{\Zp} \OO_F$ embeds into the isomorphic $\prod_{\sigma} \mathrm T \times_{\OO_F, \sigma} \OO_F$. Let $A$ be the $\Zp$-algebra of the affine group $\Res{\OO_F}{\Zp} \mathrm T$, then the Galois action on $\Res{\OO_F}{\Zp} \mathrm T \times_{\Zp} \OO_F$ induces, at the level of $\OO_F$-points, an action on $\left( \Res{\OO_F}{\Zp} \mathrm T \times_{\Zp} \OO_F \right) (\OO_F) = \Hom_{\OO_F \textnormal{-alg}} \left( A \otimes_{\Zp} \OO_F , \OO_F \right)$ given by \[ f \mapsto \left( a \otimes x \stackrel{\gamma}{\mapsto} \gamma \left( f \left( a \otimes \gamma^{-1}(x) \right) \right) \right). \]
For each $\OO_F$-algebra $B$ we have a natural adjunction isomorphism $\Hom_{\OO_F \textnormal{-alg}} \left( A \otimes_{\Zp} \OO_F , B \right) \cong \Hom_{\Zp \textnormal{-alg}} \left( A , B \right)$. Transporting the Galois action in the case $B = \OO_F$ yields that \[ \Hom_{\Zp \textnormal{-alg}} \left( A , \OO_F \right) \ni f \stackrel{\gamma}{\mapsto} \Big( a \mapsto \gamma (f(a)) \Big). \]
It is thus immediate to see that an element $f \in \Hom_{\Zp \textnormal{-alg}} \left( A , B \right)$ is Galois-invariant if and only if it has image contained in $\Zp$, which is to say it factors through $A \lra \Zp \hookrightarrow \OO_F$.

Let $\widetilde A$ be the $\OO_F$-algebra of the affine group $\mathrm T$. The universal property of Weil restriction says that for any $\Zp$-algebra $C$ we have a natural adjunction isomorphism \[ \Hom_{\OO_F \textnormal{-alg}} \left( \widetilde A , C \otimes_{\Zp} \OO_F \right) \cong \Hom_{\Zp \textnormal{-alg}} \left( A , C \right), \] functorial in $C$.
In particular, taking first $C = \Zp$ and then $C= \OO_F$ gives the following diagram, where the rows are canonical isomorphisms and the columns are inclusions of Galois-invariants, induced by the obvious post-compositions on $\Hom$-groups: \begin{displaymath} \xymatrix{ \mathrm T(\OO_F) = \Hom_{\OO_F \textnormal{-alg}} \left( \widetilde A , \Zp \otimes_{\Zp} \OO_F \right) \ar[r]^{\sim} \ar@{^{(}->}[d] & \Hom_{\Zp \textnormal{-alg}} \left( A , \Zp \right) = \Res{\OO_F}{\Zp} \mathrm T(\Zp) \ar@{^{(}->}[d] \\ \mathrm T( \OO_F \otimes_{\Zp} \OO_F ) = \Hom_{\OO_F \textnormal{-alg}} \left( \widetilde A , \OO_F \otimes_{\Zp} \OO_F \right) \ar[r]^{\sim} & \Hom_{\Zp \textnormal{-alg}} \left( A , \OO_F \right) = \Res{\OO_F}{\Zp} \mathrm T (\OO_F) } \end{displaymath}
Finally, the isomorphism of $\OO_F$-algebras \[ \OO_F \otimes_{\Zp} \OO_F \lra \prod_{\sigma} \OO_F \otimes_{\OO_F, \sigma} \OO_F \qquad x \otimes y \mapsto \left( x \otimes_{\sigma} y \right)_{\sigma} \] shows that the subalgebra $\Zp \otimes_{\Zp} \OO_F$ maps into the `diagonally embedded' $\{ \left( 1 \otimes_{\sigma} x_{\sigma} \right)_{\sigma} \, | \, x_{\sigma} = x_{\sigma'} \textnormal{ for all } \sigma, \sigma' \}$.

Therefore, along the isomorphisms \[ \mathrm T \left( \OO_F \otimes_{\Zp} \OO_F \right) \cong \mathrm T \left( \prod_{\sigma} \OO_F \otimes_{\OO_F, \sigma } \OO_F \right) \cong \prod_{\sigma} \mathrm T \left( \OO_F \otimes_{\OO_F, \sigma} \OO_F \right) \cong \prod_{\sigma} \left( \mathrm T \times_{\sigma^{-1}} \OO_F \right) (\OO_F) \] the Galois-fixed torus $\Res{\OO_F}{\Zp} \mathrm T (\Zp) = \mathrm T(\OO_F)$ embeds `diagonally'.

In particular, equation \ref{extendedtorusaction} shows that an element of this diagonally embedded torus $t = (t_{\sigma})_{\sigma}$ (with all $t_{\sigma}$ being equal, and henceforth denoted by $t$) acts on $\gamma. f_{\underline w}$ as multiplication by $\prod_{\sigma} \left( w_{\sigma} \cdot 0 \right) (t)$. This is independent of $\gamma$, and therefore the $\mathrm T(\OO_F)$ action on $e .f_{\underline w}$ is via the same character.

We have thus shown that every irreducible $\Res{\OO_F}{\Zp} \mathrm T(\Zp) = \mathrm T(\OO_F)$-submodule of $H^n_{\Zp} \left( \mathfrak n, \Zp \right)$ is of the form $V_{\OO_F} \left( \sum_i w_i \cdot 0 \right)$ for $w_1, \ldots, w_d \in W$ with $\sum_i l(w_i)=n$. It remains to find out the multiplicity of each of these irreducible modules.

Fix then an unordered $d$-uple $\underline w = \{ w_i \}_{i=1}^d \in W^d$ with $\sum_i l(w_i)=n$, and possibly some $w_i$'s are repeated.
The bijections $\sigma: \Aut (F / \Qp) \lra \underline w$ index the cohomology classes $f_{\sigma.\underline w} = \otimes_{\sigma \in \Aut(F/\Qp)} f_{w_{\sigma}}$ such that $e(f_{\sigma.\underline w})$ is in the $\sum_i \left( w_i \cdot 0 \right)$-eigenspace for the Galois-fixed subtorus $\mathrm T(\OO_F)$, and in fact we have shown that the the images under the projection operator $e$ of the various $f_{\underline w}$ (as we vary $\underline w$) yield a basis of $H^*_{\Zp} \left( \mathfrak n , \Zp \right)$.

Two different bijections $\sigma, \sigma': \Aut (F / \Qp) \lra \underline w$ yield cohomology classes $f_{\sigma.\underline w}$ and $f_{\sigma' . \underline w}$ who have the same image under the projection operator $e$ if and only if there is some $\gamma \in \Gal(F / \Qp)$ such that $\gamma.f_{\sigma.\underline w} = f_{\sigma' . \underline w} $, which is to say if and only if $\sigma. \underline w$ and $\sigma'. \underline w$ are in fact in the same Galois orbit. This shows the formula for the multiplicity.
%
%

Finally, we notice that we can relax the assumption that $p$ does not divide $d$: indeed, we consider $H^*_{\OO_F} \left( \mathfrak n_{\OO_F}, \OO_F \right) \otimes_{\OO_F} F$, with the Galois and the torus action only on the first factor. The averaging operator $e$ is well defined on this tensor product (even if it does not preserve the lattice $H^*_{\OO_F} \left( \mathfrak n_{\OO_F}, \OO_F \right)$), and is a projection operator on $\left( H^*_{\OO_F} \left( \mathfrak n_{\OO_F}, \OO_F \right) \otimes_{\OO_F} F \right)^{\Gal(F/\Qp)} \cong H^*_{\OO_F} \left( \mathfrak n_{\OO_F}, \OO_F \right)^{\Gal (F / \Qp)} \otimes_{\OO_F} F$.
The same argument as before shows now that $\mathrm T(\OO_F)$ acts on $H^n_{\OO_F} \left( \mathfrak n_{\OO_F}, \OO_F \right)^{\Gal (F / \Qp)} \otimes_{\OO_F} F$ via the characters $V_{\OO_F} \left( \sum_{\sigma} w_{\sigma} \cdot 0 \right) \otimes_{\OO_F} F$ as we vary $\underline w = (w_{\sigma})$ among Weyl group elements whose sums of lengths is $n$.

We state the result.
\kostantcorollary*

\section{Comparison between group and Lie algebra cohomology} \label{finalsection}
In this section we prove theorem \ref{groupLiecomparison}, whose outline of the proof has been given in the introduction.
As mentioned in subsection \ref{reductionsteps}, we can and do assume that $\mathrm G$ is simple.

\subsection{The Lie algebra as a graded algebra}
In this subsection we show that $\mathfrak n$ is naturally isomorphic to the graded Lie algebra associated to the lower central series filtration of $N$.
We are inspired by two sources that prove very similar results. One of the source is Polo-Tilouine \cite{PT}, sections 3.3 and 3.6, and the other is Friedlander-Parshall \cite{FP}, proposition 4.2.
Both sources are considering nilpotent groups over $\Z$, but adapting it to our $p$-adic situation is not difficult.

Let $\{ C^i(N)\}_{i \ge 1}$ be the lower central series of $N$ - this has a description in terms of root groups as in lemma \ref{centralseriesandrootgroups} below.
%

Consider the graded algebra $\gr N : = \bigoplus_i C^i(N) / C^{i+1}(N)$. This is a $\Zp$-Lie algebra (with brackets given by taking commutators), since each quotient $C^i(N) / C^{i+1}(N)$ is a finite, free $\Zp$-module - again by lemma \ref{centralseriesandrootgroups}.

Our running assumption on the residue characteristic $p \ge 5$ implies that the coefficients appearing in the Chevalley commutation relations (see for instance Carter \cite{carter}, theorem 5.2.2) are invertible in $\Zp$, which is crucial for the following result.
\begin{lem} \label{centralseriesandrootgroups} For all $n \ge 1$ we have \[ C^n(N) = \prod_{h(\alpha) \ge n} N_{\alpha} \] where as usual the product on the right is taken with respect to our fixed non-decreasing ordering of the roots.
\end{lem}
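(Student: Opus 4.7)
Let $H_n := \prod_{h(\alpha) \ge n} N_\alpha$, ordered in our fixed non-decreasing-height ordering of $\Phi^+$. The essential tool is the Chevalley commutator formula (\cite{carter}, Theorem 5.2.2): for $\alpha, \beta \in \Phi^+$,
\[ [x_\alpha(s), x_\beta(t)] = \prod_{i, j \ge 1,\; i\alpha+j\beta \in \Phi} x_{i\alpha+j\beta}\!\left(c_{ij}^{\alpha,\beta}\, s^i t^j\right), \]
with integer structure constants $c_{ij}^{\alpha,\beta}$ of absolute value at most $3$; the assumption $p \ge 5$ then forces $c_{11}^{\alpha,\beta} \in \Zp^\times$ whenever $\alpha+\beta \in \Phi$. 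Since height is additive, $\{\alpha \in \Phi^+ : h(\alpha) \ge n\}$ is closed under root sums, and so $H_n$ is a (normal) subgroup of $N$, using \eqref{multiplicationisom}. The forward inclusion $C^n(N) \subseteq H_n$ follows by induction on $n$: the base $n = 1$ is \eqref{multiplicationisom}, and Chevalley gives $[N_\alpha, N_\beta] \subseteq \prod_{h(\gamma) \ge h(\alpha)+h(\beta)} N_\gamma$, whence $C^{n+1}(N) = [N, C^n(N)] \subseteq [N, H_n] \subseteq H_{n+1}$.

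For the reverse inclusion $H_n \subseteq C^n(N)$, I would argue again by induction on $n$, with trivial base $n = 1$. Using the forward inclusion one has $C^n(N) = [N, C^{n-1}(N)] = [N, H_{n-1}]$, so it suffices to prove $N_\gamma \subseteq C^n(N)$ for each $\gamma \in \Phi^+$ with $h(\gamma) \ge n$. I do this by downward induction on $h(\gamma)$, starting from the maximal height (for which the identity below has empty trailing product). In general, since $h(\gamma) \ge 2$, write $\gamma = \alpha + \beta$ with $\alpha$ simple and $\beta \in \Phi^+$ of height $h(\gamma) - 1 \ge n-1$; the commutator formula yields
\[ [x_\alpha(1), x_\beta(t)] = x_\gamma\!\left(c_{11}^{\alpha,\beta}\, t\right) \cdot \prod_{\delta = i\alpha + j\beta,\; i+j \ge 3} x_\delta\!\left(c_{ij}^{\alpha,\beta}\, t^j\right). \]
The left-hand side is in $[N, H_{n-1}] = C^n(N)$; each $\delta$ in the product on the right has $h(\delta) > h(\gamma) \ge n$, so $N_\delta \subseteq C^n(N)$ by the downward induction hypothesis; and because $c_{11}^{\alpha,\beta} \in \Zp^\times$, the map $t \mapsto c_{11}^{\alpha,\beta}\, t$ surjects $\OO_F$ onto itself. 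Rearranging places $N_\gamma \subseteq C^n(N)$.

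\textbf{Main obstacle.} The crux is the nested downward induction on height, where one must isolate $x_\gamma$ in the Chevalley commutator identity; this relies essentially on the invertibility of $c_{11}^{\alpha,\beta}$ in $\Zp$, which is precisely what the hypothesis $p \ge 5$ buys us (the corresponding statement for $p = 2, 3$ would only place a sublattice of $N_\gamma$ in $C^n(N)$). The remainder is routine bookkeeping with the ordered product decomposition \eqref{multiplicationisom}.
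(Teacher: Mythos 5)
Your proposal is correct and follows essentially the same route as the paper: forward inclusion by induction on $n$ via the Chevalley commutator formula, and the reverse inclusion by a downward induction on root height, writing $\gamma = \alpha + \beta$ with $\alpha$ simple and using $p \ge 5$ to make the structure constant $c_{11}^{\alpha,\beta}$ a unit so that $N_\gamma$ can be isolated modulo higher-height terms. Your version is in fact slightly more careful than the paper's at one point (letting $t$ vary in $[x_\alpha(1), x_\beta(t)]$ to capture all of $N_\gamma$, and noting that $H_n$ is a subgroup), but the underlying argument is the same.
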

\begin{proof}
By induction on $n$, the case $n=1$ being given by the isomorphism in equation \ref{multiplicationisom}.

Suppose the claim holds for $n-1$, so that $C^{n-1}(N) = \prod_{h(\alpha) \ge n-1} N_{\alpha}$. By definition, $C^n(N) = [N, C^{n-1}(N)]$ so the commutation relations in \cite{carter} (theorem 5.2.2) make it clear that the inclusion $C^n(N) \subset \prod_{h(\alpha) \ge n} N_{\alpha}$ holds.

For the other inclusion, we show that the $N_{\beta}$'s having $h(\beta)= m \ge n$ are contained in $C^n(N)$ by decreasing induction on $m$.
For the base case, fix $\beta$ a root of maximal height $h(\beta) = m$ and take a simple root $\alpha_i$ such that $\beta - \alpha_i =\alpha \in \Phi^+$. Then $h(\alpha) = m-1 \ge n-1$, so that $N_{\alpha} \subset C^{n-1}(N)$.
Consider then the commutator \[ [\theta_{\alpha_i}(1), \theta_{\alpha}(1) ] = \theta_{\beta} (c) \] for some $c \in \OO_F^*$ by the aforementioned commutation relations and our assumption on the residue characteristic $p \ge 5$.
The fact that there is no other factor on the right hand side is precisely due to the maximality of $m$, so that there are no other roots of the form $h \alpha_i + k \alpha$ for $i,k \ge 1$.
This shows that $N_{\beta} \subset [C^1(N), C^{n-1}(N)] = C^n(N)$.

Suppose now that we have shown that all $N_{\beta'}$'s having $l(\beta') \ge m+1 > n$ are contained in $C^n(N)$.
Fix $\beta$ with $l(\beta)=m \ge n$, take again a simple root $\alpha_i$ such that $\beta - \alpha_i = \alpha \in \Phi^+$ and consider the commutator \[ [\theta_{\alpha_i}(1), \theta_{\alpha}(1)] = \prod_{h, k \ge 1} \theta_{h \alpha_i + k \alpha} \left( c_{h,k,\alpha_i,\alpha} \cdot (\pm 1) \right). \]
The only positive root on the right hand side whose height is less than $m+1$ is the one corresponding to $h=1=k$, that is, it is exactly $\beta$.
Therefore, \[ [\theta_{\alpha_i}(1), \theta_{\alpha}(1)] = \theta_{\beta} (c) \cdot u' \] for some $c \in \OO_F^*$ and $u' \in \prod_{h(\beta') \ge m+1} N_{\beta'} \subset C^n(N)$. In particular, we obtain that $N_{\beta} \subset C^n(N)$.
\end{proof}

Notice now that the height function $h$ provides us also with a grading (in fact, with a filtration) on the Lie algebra $\mathfrak n$: that is to say, $\mathfrak n (m) = \bigoplus_{h(\alpha) = m} \mathfrak n_{\alpha}$. The usual commutation formulas show that the bracket operation is compatible with the grading, i.e. $\left[ \mathfrak n(m), \mathfrak n(n) \right] \subset \mathfrak n(m+n)$.

\begin{thm} \label{unipotentLieiso} The family of maps \[ \theta_n: C^n(N) / C^{n+1}(N) \cong \prod_{h(\alpha)=n} N_{\alpha} \stackrel{\prod \theta_{\alpha}^{-1}}{\lra} \bigoplus_{h(\alpha)=n} \mathfrak n_{\alpha} \]
indexed by $n$ provides an isomorphism of graded $\Zp$-Lie algebras $\gr N \cong \mathfrak n$.
\end{thm}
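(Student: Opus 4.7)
The plan is twofold: first, establish that each $\theta_n$ is a $\Zp$-module isomorphism so that assembling them yields a graded $\Zp$-module isomorphism $\theta: \gr N \to \mathfrak n$; second, verify that $\theta$ carries the commutator bracket on $\gr N$ to the Lie bracket on $\mathfrak n$.

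For the first piece, Lemma \ref{centralseriesandrootgroups} gives $C^n(N) = \prod_{h(\alpha) \geq n} N_\alpha$, hence $C^n(N)/C^{n+1}(N) \cong \prod_{h(\alpha) = n} N_\alpha$ as sets. Because commutators of root groups of heights exactly $n$ land in root groups of height $\geq 2n \geq n+1$ (by the Chevalley commutation relations applied to two roots of height $n$), the various $N_\alpha$ with $h(\alpha) = n$ commute with one another in the quotient, so in fact $C^n(N)/C^{n+1}(N) \cong \bigoplus_{h(\alpha) = n} N_\alpha$ as abelian groups. Each map $\theta_\alpha^{-1}: N_\alpha \to \mathfrak n_\alpha$ is a $\Zp$-module isomorphism: source and target, via $\theta_\alpha$ and the Weil restriction of $\Lie(\theta_\alpha)$ respectively, are canonically identified with $(\OO_F, +)$ as $\Zp$-modules. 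Summing over $\alpha$ of height $n$ yields $\theta_n$, and concatenating over $n$ gives a graded $\Zp$-module isomorphism $\theta: \gr N \to \mathfrak n$.

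For the second piece, by $\Zp$-bilinearity of both brackets I would reduce to checking on pure ``root'' elements: given $\bar x \in C^m(N)/C^{m+1}(N)$ represented by $x = \theta_\alpha(s)$ with $h(\alpha) = m$ and $s \in \OO_F$, and similarly $\bar y$ represented by $y = \theta_\beta(t)$ with $h(\beta) = n$, the bracket in $\gr N$ is the class of $[x, y] = xyx^{-1}y^{-1}$ in $C^{m+n}(N)/C^{m+n+1}(N)$. The Chevalley commutation relations (cf. Carter \cite{carter}, Theorem 5.2.2) give
\[
[\theta_\alpha(s), \theta_\beta(t)] = \prod_{\substack{i, j \geq 1 \\ i\alpha + j\beta \in \Phi^+}} \theta_{i\alpha + j\beta}\bigl(c_{i,j,\alpha,\beta}\, s^i t^j\bigr),
\]
where the roots appearing have heights $im + jn > m+n$ except when $(i,j) = (1,1)$. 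Thus modulo $C^{m+n+1}(N)$ the commutator reduces to $\theta_{\alpha + \beta}(c_{1,1,\alpha,\beta}\, st)$, with the convention that the term is trivial when $\alpha + \beta \notin \Phi^+$. On the Lie algebra side, $\theta$ sends $\bar x, \bar y$ to $sX_\alpha, tX_\beta \in \mathfrak n$, where $X_\gamma := \theta_\gamma^{-1}(\theta_\gamma(1))$ is the Chevalley generator of $\mathfrak n_\gamma$, and the Lie bracket evaluates to $st \cdot N_{\alpha,\beta} X_{\alpha+\beta}$. Bracket compatibility then follows from the classical identity $N_{\alpha,\beta} = c_{1,1,\alpha,\beta}$ relating Lie-level and group-level Chevalley structure constants.

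The only delicate point is the bookkeeping around structure constants: matching the group-level coefficients $c_{1,1,\alpha,\beta}$ with the Lie-level $N_{\alpha,\beta}$ is classical for Chevalley $\Z$-forms but needs to be stated and cited carefully. The hypothesis $p \geq 5$ ensures that the higher-order coefficients appearing in the commutation relations are $\Zp$-units, so that ``lower-order'' summands do not accidentally contribute in degree $m+n$. The Weil restriction from $\OO_F$ down to $\Zp$ introduces no additional trouble, since it simply forgets the $\OO_F$-structure while preserving both the group law on each $N_\alpha$ and the bracket on each $\mathfrak n_\alpha$.
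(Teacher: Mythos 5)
Your proposal is correct and follows essentially the same route as the paper: the $\Zp$-linear bijectivity of the $\theta_n$ is read off from Lemma \ref{centralseriesandrootgroups}, and the bracket compatibility is exactly the Chevalley commutation-relation computation (Carter, Theorem 5.2.2, where the coefficient for $(i,j)=(1,1)$ is the Lie structure constant $N_{\alpha,\beta}$) that the paper delegates to Friedlander--Parshall's proof of Proposition 4.2 in \cite{FP}. You have simply written out in full the step the paper cites, including the correct observation that for $i,j\ge 1$ and $(i,j)\neq(1,1)$ the root $i\alpha+j\beta$ has height strictly greater than $h(\alpha)+h(\beta)$, so only the $(1,1)$ term survives modulo $C^{m+n+1}(N)$.
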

\begin{rem} We are making a small abuse of notation by implicitly identifying $\mathbb G_a \cong \Lie \mathrm U_a \cong \mathfrak u_{\alpha}$ via $\operatorname d \theta_{\alpha}$.
\end{rem}
\begin{proof}
The fact that the maps $\theta_n$ are $\Zp$-linear and provide a bijection $\gr N \lra \mathfrak n$ is clear from lemma \ref{centralseriesandrootgroups}, what needs to be shown is that the family of maps $\{ \theta_n \}$ respects the brackets. This rests on the usual commutation formulas of \cite{carter}, section 5.2, as explained by Friedlander and Parshall in their proof of proposition 4.2 in \cite{FP}.
\end{proof}

\subsection{The universal enveloping algebra}
We now prove that the filtration given by the powers of the augmentation ideal on the completed group algebra $\Zp[[N]]$ has associated graded algebra isomorphic to the universal enveloping algebra $\mathcal U(\mathfrak n)$. We adapt the argument from Hartley's exposition in \cite{hartley1}, section 2.

Let then $I = \langle x - 1 \rangle_{x \in N}$ be the augmentation ideal, that is to say, the kernel of the augmentation map $\Zp[[N]] \lra \Zp$. This is a two sided ideal and is closed for the profinite topology of $\Zp[[N]]$.



We filter $\Zp[[N]]$ by powers of $I$, and consider the associated graded algebra \[ \gr \Zp[[N]] : = \bigoplus_{n \ge 0}  I^n /  I^{n+1}. \]
The associativity of the multiplication operation in the group algebra yields that the operation \[ (x +   I^{n+1})(y +  I^{m+1}) :=xy +  I^{m+n+1} \] on $\gr \Zp[[N]]$ is associative. In particular, we get a graded $\Zp$-Lie algebra structure, where the bracket operation is defined as $[x,y] = xy -yx$.

It is well-known (see for example Passi \cite{passi} chapter III section 1.5) that if $x \in C^n(N)$ (the $n$-th subgroup in the lower central series for $N$) then $x-1 \in J^n$, the $n$-th power of the augmentation ideal $J \subset \Zp[N]$, the (uncompleted) group algebra. In particular, the closure of $J$ in $\Zp[[N]]$ is exactly $I$, and the same holds for the respective powers. 

We have then a morphism of graded $\Zp$-Lie algebras: \begin{equation} \label{inducingmap} \gr N \lra \gr \Zp[[N]] \qquad x C^{n+1}(N) \mapsto (x-1) + I^{n+1} \quad \forall x \in C^n(N). \end{equation}
Indeed it is easy to check that \[ xyx^{-1}y^{-1} -1 = [x,y]_N -1 \equiv [x-1,y-1]_{\gr \Zp[[N]]} = (x-1)(y-1)-(y-1)(x-1) = xy-yx \bmod I^{m+n+1} \] for any $x \in C^n(N), y \in C^m(N)$.

We denote by $\mathcal U(\gr N)$ the universal enveloping algebra of $\gr N \cong \mathfrak n$ as a graded $\Zp$-Lie algebra. That is, $\mathcal U(\gr N)$ is the initial object in the category of $\Zp$-associative graded Lie algebras equipped with graded $\Zp$-Lie algebra maps from $\gr N$.
\begin{rem}
In fact, $\mathcal U(\gr N)$ is the usual enveloping algebra of the $\Zp$-Lie algebra $\gr N$, with the grading induced by that of $\gr N$.
\end{rem}

Since $\gr \Zp[[N]]$ is a graded associative $\Zp$-Lie algebra, the above map \ref{inducingmap} extends to \begin{equation} \label{envelopinmap} \Theta: \mathcal U(\gr N) \lra \gr \Zp[[N]]. \end{equation}
We will show in theorem \ref{envelopingalgebraiso} that $\Theta$ is an isomorphism.

We now describe a homogeneous $\Zp$-basis for the graded Lie algebra $\gr N$.
Recall from lemma \ref{centralseriesandrootgroups} that $C^n(N) = \prod_{h(\alpha) \ge n} N_{\alpha}$, so that we have an isomorphism $\prod_{h(\alpha) = n} N_{\alpha} \cong C^n(N) / C^{n+1}(N)$ induced by the injection of the root groups inside $C^n(N)$.

Fix once and for all an isomorphism $\OO_F \cong \Zp^d$ of free $\Zp$-modules. Then we have $\Zp^d \cong \OO_F \cong N_{\alpha}$ where the second isomorphism is $\theta_{\alpha}$. Since both maps are $\Zp$-linear, this provides us with a $\Zp$-basis $B_{\alpha} = \{ x^{\alpha}_1, \ldots, x^{\alpha}_d \}$ (the image of the standard basis of the free $\Zp$-module $\Zp^d$) for each $N_{\alpha}$.

We conclude that \[ B_n = \bigcup_{h(\alpha) = n} B_{\alpha}\] (as usual, ordered accordingly to the ordering on the roots fixed before) is a $\Zp$-basis for $C^n(N) / C^{n+1}(N)$.
The union of the $B_n$'s as $n$ increases provides us then with a $\Zp$-basis $B = \{ x_1, \ldots, x_M \}$ of $\gr N$ formed by homogeneous elements and ordered in (weakly) increasing degree. 
For each element $x_j \in B$ we denote by $\mu(x_j)$ the integer such that $x_j \in B_n$ - that is to say, the height of the root $\alpha(j)$ such that $x_j \in N_{\alpha(j)}$ - and by $X_j$ the subgroup of $N_{\alpha(j)} \cong \Zp^d$ which is `$\Zp$-spanned' by $x_j$ (an isomorphic image of $\Zp$).

\begin{lem} \label{augmentationlemma}
For each $n \ge 1$, define $E_n$ to be the $\Zp$-span (inside $\Zp[N]$) of the elements \[ (1 - y_1) \cdot \ldots \cdot (1 - y_s) \textnormal{ such that } y_j \in X_{k_j} \textnormal{ and } \sum \mu(x_{k_j}) \ge n. \]
Here, we ordered the products so that if $j < j'$ then $k_j \le k_{j'}$ which is to say that $x_{k_j}$ comes no later than $x_{k_{j'}}$ in our ordering fixed above.\footnote{We could have $x_{k_j} = x_{k_{j'}}$.}
Then $J^n = E_n$, and in particular the closure of $E_n$ in $\Zp[[N]]$ coincides with $I^n$.
\end{lem}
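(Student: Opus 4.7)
The plan is to prove the equality $J^n = E_n$ by two inclusions, after which the statement about the closure $I^n$ follows because $J$ is dense in $I$ and $\Zp[[N]]$ is a topological ring.

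The inclusion $E_n \subseteq J^n$ is the easy direction. If $y_j \in X_{k_j} \subset N_{\alpha(k_j)}$, then by Lemma \ref{centralseriesandrootgroups} we have $y_j \in C^{\mu(x_{k_j})}(N)$, and the recalled fact that $x \in C^m(N)$ implies $x-1 \in J^m$ gives $1-y_j \in J^{\mu(x_{k_j})}$. Multiplying across the $s$ factors, any generator of $E_n$ lies in $J^{\sum_j \mu(x_{k_j})} \subseteq J^n$.

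For the reverse inclusion $J^n \subseteq E_n$, I would use Mal'cev-type coordinates. The isomorphism \eqref{multiplicationisom}, combined with the chosen $\Zp$-bases $B_\alpha$ of each $N_\alpha$, gives every $g \in N$ a unique normal form $g = x_1^{a_1} x_2^{a_2} \cdots x_M^{a_M}$ with $a_i \in \Zp$, where the $x_i$ are enumerated in our fixed (weakly height-increasing) order. The elementary identity
\[
1 - u_1 u_2 \cdots u_M \;=\; \sum_{\emptyset \ne T \subseteq \{1,\ldots,M\}} (-1)^{|T|+1} \!\!\prod_{i \in T,\ \mathrm{ordered}} (1 - u_i)
\]
expands $1-g$ as a $\Zp$-combination of strictly ordered products of $(1-x_i^{a_i})$'s. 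Each summand has $\sum \mu \ge 1$, so $J \subseteq E_1$, handling the base case $n=1$. For $n \ge 2$, each generator $(1-g_1)\cdots(1-g_n)$ of $J^n$ is, after applying this expansion blockwise, a $\Zp$-combination of concatenated products $(1-y_1)\cdots(1-y_s)$ with $y_j \in X_{k_j}$; each such product has $\sum_j \mu(x_{k_j}) \ge n$ (there are at least $n$ factors, each contributing $\ge 1$), but the indices $k_j$ are only ordered within each of the $n$ blocks and not across them.

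The remaining task — converting an unordered product of this form into an $E_n$-element — is what I expect to be the main obstacle, and I would handle it via a nested induction. The key algebraic identity is
\[
(1-u)(1-v) \;=\; (1-v)(1-u) \;+\; \bigl([u,v] - 1\bigr)\, v\,u,
\]
where for $u \in N_{\alpha}$ and $v \in N_{\beta}$ the Chevalley commutation relations (valid with unit structure constants thanks to $p \ge 5$, as in Lemma \ref{centralseriesandrootgroups}) put $[u,v]$ in $\prod_{h(\gamma) \ge h(\alpha)+h(\beta)} N_\gamma$, hence $[u,v] - 1 \in J^{\mu(u)+\mu(v)}$. After expanding $[u,v]$ and $vu$ in Mal'cev coordinates and applying the base-case expansion, the correction term becomes a sum of unordered products whose total $\mu$-sum is still $\ge n$. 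One performs bubble-sort on the factors, and the induction terminates because each adjacent swap either strictly decreases the lexicographic value of the sequence $(k_1,\ldots,k_s)$ or produces correction products whose $\mu$-sum has strictly increased (since the commutator lives in strictly higher height); in either case the outer induction on $n$ combined with the lex-induction on the $k$-sequence closes. Finally, the claim $\overline{E_n} = I^n$ follows because $I^n$ is by definition the closure of $J^n$ and $E_n = J^n$ as ungraded submodules of $\Zp[N]$.
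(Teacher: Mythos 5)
Your two easy steps match the paper's: the inclusion $E_n\subseteq J^n$ via $y_j\in C^{\mu(x_{k_j})}(N)$, and the normal-form expansion showing $J=E_1$. The gap is in the reordering step, which is where essentially all the content of the lemma lies. First, the stated dichotomy is not correct: a correction term need not have strictly larger $\mu$-sum. Writing $([u,v]-1)vu=([u,v]-1)-([u,v]-1)(1-vu)$, the summands coming from $[u,v]-1$ alone can consist of a single factor $1-z$ with $z$ of height exactly $h(\alpha)+h(\beta)$, so the two swapped factors of heights $a,b$ get replaced by one factor of height $a+b$ and the total weight is unchanged. Second, and more seriously, ``the $\mu$-sum strictly increased'' is not a termination measure for this rewriting: the weight is unbounded above, and a term of large weight is not thereby in $E_n$ --- it still has to be expressed through \emph{ordered} products. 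The corrections typically create new inversions (the commutator factor has larger height than the factors of the re-expanded $vu$ sitting to its right, and possibly than factors of the ambient prefix and suffix), and these new terms have different, generally longer sequences $(k_1,\ldots,k_s)$, so neither your lex induction (well-founded only for a fixed length or multiset of factors) nor the outer induction on $n$ applies to them. As written, the nested induction does not close.

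The missing bookkeeping is precisely the rearrangement statement $E_aE_b\subseteq E_{a+b}$, which is Lemma 2.2 of \cite{hartley2}; the paper quotes it (``the proof goes through verbatim'') rather than reproving it, and then the hard inclusion is the one-line induction $J^n=J^{n-1}\cdot J\subseteq E_{n-1}\cdot E_1\subseteq E_n$ --- exactly parallel to your plan, but with the combinatorial heart delegated to Hartley. So either cite that lemma as the paper does, or replace your bubble-sort termination argument by a genuinely well-founded one; the latter is doable but is a real piece of work, not a remark. Your final sentence is fine in substance, with one small correction: $\overline{J^n}=I^n$ is not the definition of $I^n$ but a fact recorded in the paper just before the lemma (via Passi and the closedness of the augmentation ideal), and granting it, $E_n=J^n$ does yield $\overline{E_n}=I^n$.
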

\begin{proof}
This follows closely section 2 in \cite{hartley2}.
First of all just like in lemma 2.2 of loc. cit., one shows that $E_n E_m \subset E_{n+m}$ - the proof goes through verbatim, since it's not much more than a re-arrangement lemma. In particular since the product map in $\Zp[[N]]$ is continuous we can take closures to get \[ \overline {E_a} \cdot \overline {E_b} \subset \overline {E_{a+b}} \textnormal{ for all } a,b \ge 1. \]

Now notice that clearly $E_1 \subset J$. On the other hand, each $g \in N$ can be written down uniquely (thanks to the isomorphism in formula \ref{multiplicationisom}) as an ordered product $g = \prod_{\alpha} g^{\alpha}$ with $g^{\alpha} \in N_{\alpha}$.
Since we fixed isomorphisms $N_{\alpha} \cong \Zp^d$, we obtain that $g^{\alpha} = \prod_{i=1}^d y_i^{\alpha}$ (again an ordered product) for $y_i^{\alpha} \in X_i^{\alpha}$.
Replacing each $y_i^{\alpha}$ by $\left( 1 - (1 - y_i^{\alpha}) \right)$ and expanding the product one gets that \[ g = 1 + \sum_{\textnormal{all products}} (-1)^s (1-y'_1) \cdot \ldots \cdot (1-y'_s). \]
In particular, $1-g$ is in $E_1$. Since $\{ 1-g \}_{g \in N}$ is a $\Zp$-basis for the augmentation ideal $J \subset \Zp[N]$, we obtain that $E_1 = J$, and taking closures that $\overline E_1 = I$.

Now, as $J^{n+1} = J^n \cdot J$, we obtain by induction on $n$ that $J^n \subset E_n$.

The other inclusion is clear, since $y_j \in \Zp.x_j \subset C^{\mu(x_j)}(N)$ implies that $(1-y_j) \in J^{\mu(x_j)}$. Therefore, $J^n = E_n$ and taking closures, we obtain \[ \overline {E_n} = \overline {J^n} = I^n. \]
\end{proof}
%
For each element $x_k$ of our basis $B$ and each $r \ge 0$, we define the following elements of $\Zp[N]$: \[ u_r^{(k)} = x_k^{- \ceil{\frac{r}{2}}} (x_k-1)^r = \left\{ \begin{array}{ccc} 1 & \textnormal{ if } r=0 \\ x_k^{-i} (x_k-1)^{2i} & \textnormal{ if } r=2i \\ x_k^{-i}(x_k-1)^{2i-1} & \textnormal{ if } r=2i-1 \end{array} \right. \]
For each $M$-uple of non-negative integers $\mathbf j = (j_1, \ldots, j_M)$ we set \[ u(\mathbf j) := u_{j_1}^{(1)} \cdot \ldots \cdot u_{j_M}^{(M)} \] having weight \[ \mu(\mathbf j) := \sum_{k=1}^M j_k \mu(x_k). \]
\begin{prop} \label{augmentationpowersbases}
A topological $\Zp$-basis of $I^n$ is given by \[ \mathcal B_n = \left\{ u(\mathbf j) \, | \, \mu(\mathbf j) \ge n \right\}. \]
By `topological $\Zp$-basis' we mean that these elements are $\Zp$-linearly independent, and that the closure of their $\Zp$-span in $\Zp[[N]]$ coincide with $I^n$.
\end{prop}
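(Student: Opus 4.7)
The plan is a three-stage argument. \emph{Stage 1 (containment $\mathcal B_n \subseteq I^n$):} From $x_k \in C^{\mu(x_k)}(N)$ one obtains $x_k - 1 \in I^{\mu(x_k)}$ (as recorded just before Lemma \ref{augmentationlemma}), and $x_k^{-\lceil r/2 \rceil}$ is a unit in $\Zp[[N]]$, so $u_r^{(k)} \in I^{r\mu(x_k)}$ and $u(\mathbf j) \in I^{\mu(\mathbf j)} \subseteq I^n$ whenever $\mu(\mathbf j) \ge n$.

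\emph{Stage 2 (topological $\Zp$-linear independence of the full family $\{u(\mathbf j)\}_{\mathbf j}$):} I would exploit the set-theoretic factorization $N = X_1 \cdot X_2 \cdots X_M$ coming from Equation \ref{multiplicationisom} and the fixed $\Zp$-basis of each $N_\alpha$: every element of $N$ has a unique ordered expression $x_1^{a_1}\cdots x_M^{a_M}$ with $a_k \in \Zp$. This identifies $\Zp[[N]]$, as a $\Zp$-module, with the completed tensor product $\widehat{\bigotimes}_k \Zp[[X_k]]$. Within each factor $\Zp[[X_k]] \cong \Zp[[T_k]]$, the standard topological basis $\{(x_k-1)^r\}_{r\ge 0}$ is related to $\{u_r^{(k)}\}_{r \ge 0}$ by the upper-triangular unipotent change of basis $u_r^{(k)} = (1 + O(x_k-1))(x_k-1)^r$, so $\{u_r^{(k)}\}_{r \ge 0}$ is also a topological $\Zp$-basis of $\Zp[[X_k]]$. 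Tensoring gives that $\{u(\mathbf j)\}_{\mathbf j}$ is a topological $\Zp$-basis of $\Zp[[N]]$.

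\emph{Stage 3 (topological span of $\mathcal B_n$ equals $I^n$):} By Lemma \ref{augmentationlemma}, $I^n$ is the closure of $E_n$, whose generators are ordered products $(1 - y_1)\cdots(1 - y_s)$ with $y_j \in X_{k_j}$, $k_1 \le \cdots \le k_s$, and $\sum_j \mu(x_{k_j}) \ge n$. Writing $y_j = x_{k_j}^{a_j}$ with $a_j \in \Zp$, the binomial expansion $1 - y_j = -\sum_{r \ge 1}\binom{a_j}{r}(x_{k_j} - 1)^r$ converges in $\Zp[[N]]$. Rewriting each $(x_{k_j}-1)^r$ as a convergent $\Zp$-combination of $\{u_s^{(k_j)}\}_{s \ge r}$ inside $\Zp[[X_{k_j}]]$ (via stage 2), then multiplying the $s$ expansions together and collapsing adjacent factors lying in the same abelian subalgebra $\Zp[[X_k]]$ via $u_r^{(k)} u_{r'}^{(k)} = x_k^{\lceil(r+r')/2\rceil - \lceil r/2\rceil - \lceil r'/2\rceil} u_{r+r'}^{(k)}$ (the leading power is $0$ or $-1$, which is re-expanded back into the $u$-basis), yields a convergent expansion in $\{u(\mathbf j')\}$ whose every term satisfies $\mu(\mathbf j') \ge \sum_j \mu(x_{k_j}) \ge n$. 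Crucially, no reordering across distinct $X_k$'s is needed, since the $k_j$'s are already increasing and so match the canonical order of $u(\mathbf j')$.

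The main obstacle is the bookkeeping in stage 3: verifying that the $X_k$-local collapsing and re-expansion never decrease the total weight. This boils down to routine but careful tracking of the ceiling-function arithmetic in the definition of $u_r^{(k)}$, combined with the already-established topological basis property inside each $\Zp[[X_k]]$; no fundamentally new idea is required beyond patient computation.
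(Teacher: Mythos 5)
Your proposal is correct and follows the same overall route as the paper, which (following Hartley) also works through the factorization of $N$ into the root-group factors $X_k$, the $u$-polynomials, and the $E_n$-characterization of the augmentation ideal powers from Lemma \ref{augmentationlemma}. Your Stage 1 (containment $\mathcal B_n \subseteq I^n$) is a small but genuine step the paper leaves implicit, and your Stage 2 makes the completed-tensor-product picture and the unipotent upper-triangular change of basis more explicit than the paper does; these are fine.

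The main divergence is in Stage 3, where the paper is considerably more efficient. After grouping adjacent factors $(1-y_j)$ lying in the same $X_k$ (which requires no reordering, exactly as you note), the paper simply observes that the grouped product is a product of $m$ elements of the augmentation ideal $I_k \subset \Zp[[X_k]]$ and hence lies in $I_k^m$, and then applies Claim \ref{Zpgroupalgebra} --- that $\{u_t^{(k)}\}_{t \ge m}$ is a topological $\Zp$-basis of $I_k^m$ --- to conclude directly. Your route replaces this with a binomial expansion of each $1-y_j$, a change of basis to the $u$'s term by term, and an iterative collapse. This works, but it is doing by hand what the claim accomplishes once and for all. More importantly, your characterization of the collapse as ``routine ceiling-function arithmetic'' undersells the real content: after applying $u_r^{(k)}u_{r'}^{(k)} = x_k^{\,0\text{ or }-1}u_{r+r'}^{(k)}$, re-expanding $x_k^{-1}$ as $u_0^{(k)} - u_1^{(k)}$ produces the term $u_1^{(k)}u_{r+r'}^{(k)}$, which must itself be collapsed, and the process iterates indefinitely. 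What makes the weight bound survive this infinite (but convergent) recursion is precisely the statement that $x_k^{-a}(x_k-1)^b$ lies in the closure of the $\Zp$-span of $\{u_t^{(k)}\}_{t \ge b}$ for all $a \ge 0$ --- i.e.\ that $\{u_t^{(k)}\}_{t \ge b}$ is a topological $\Zp$-basis of $I_k^b$. Your Stage-2 observation about the unipotent upper-triangular change of basis does imply this (since the two families $\{u_t^{(k)}\}_{t\ge b}$ and $\{(x_k-1)^t\}_{t\ge b}$ then have the same closed span, namely $I_k^b$, which contains every $x_k^{-a}(x_k-1)^b$), but it should be isolated as the key intermediate lemma rather than buried under ``patient computation.'' With that made explicit, your proof is sound; you would essentially have reproved Claim \ref{Zpgroupalgebra} in passing and could then just invoke it to shortcut Stage 3 in the way the paper does.
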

\begin{proof}
Denote $I_k \subset \Zp[[X_k]]$ the augmentation ideal. Notice that if $y_j \in X_{k_j}$, then $(1-y_j) \in I_{k_j}$.
Grouping together the factors $(1-y_j)$'s corresponding to the same subgroup $X_k$, we obtain that $E_n$ is spanned by products $\xi_1 \cdot \ldots \xi_M$ such that $\xi_k \in I_k^{j_k}$ and $\sum j_k \mu(x_k) \ge n$ (we allow $j_k=0$ if no factor corresponding to $X_k$ appears).

We now construct topological $\Zp$-bases for the powers of the augmentation ideal $I_k$. In fact, we already have:
\begin{claim} \label{Zpgroupalgebra}
The set $\{ u_t^{(k)} \}_{t \ge r}$ is a topological $\Zp$-basis for $I_k^r$ - i.e. they are $\Zp$-linearly independent and their $\Zp$-span is dense in $I_k^r$.
\end{claim}
\begin{proof}[Proof of the claim] \setlength{\leftskip}{1cm} Exactly like in lemma 2.4 in \cite{hartley2} one proves that $\{ u_0^{(k)}, \ldots, u_r^{(k)} \}$ is a $\Zp$-basis for $\sum_{i=- \ceil{\frac{r}{2}}}^{\floor{\frac{r}{2}}} \Zp x_k^i$.
In particular, $\{ u_r^{(k)} \}_{r \ge 0}$ is a $\Zp$-basis for $\bigoplus_{i \in \Z} \Zp x_k^i$, which shows the $\Zp$-linear independence statement.
Now notice that once we take the closure of this in $\Zp[[X_k]]$, we obtain the entire group algebra.
This is best seen from the fact that $\Z \hookrightarrow \Zp$ is a dense embedding, or even more explicitly from the well-known isomorphisms $\psi: \Zp[[X_k]] \lra \Zp[[T]]$ (ring of formal power series in one variable) obtained by sending $(x_k -1) \mapsto T$.

The definition of the $u_i^{(k)}$'s makes it clear that $I_k^r \supset \sum_{t \ge r} \Zp u_t^{(k)}$.
On the other hand, we have \begin{equation} \label{directsumgroupalgebra} \Zp[[X_k]] = \sum_{i=- \ceil{\frac{r}{2}}} ^{\floor{\frac{r}{2}}} \Zp x_k^i \oplus \overline{\sum_{i \ge r} \Zp u_i^{(k)}}, \end{equation} again this can be seen by considering the analogous statement in the isomorphic ring $\Zp[[T]]$.

Intersecting both sides of equation \ref{directsumgroupalgebra} with $I_k^r$ yields that $I_k^r = \left( I_k^r \cap \sum_{i=- \ceil{\frac{r}{2}}} ^{\floor{\frac{r}{2}}} \Zp.x_k \right) \oplus \overline{\sum_{i \ge r} \Zp u_i^{(k)}}$, so it suffices to prove that the first summand is zero to complete the proof of the claim.

We have shown that $\sum_{i=- \ceil{\frac{r}{2}}} ^{\floor{\frac{r}{2}}} \Zp x_k^i$ is the $\Zp$-span of $\{ u_0^{(k)}, \ldots, u_{r-1}^{(k)} \}$.
Applying the aforementioned isomorphism $\Zp[[X_j]] \cong \Zp[[T]]$ and working in the target ring, one sees that $I_j^r = (T^r)$ but the only element in the $\Zp$-span of $\psi \left( \{ u_0^{(k)}, \ldots, u_{r-1}^{(k)} \} \right) = \{ 1, T(T+1)^{-1}, T^2(T+1)^{-1}, \ldots, T^{r-1} (T+1)^{- \ceil{\frac{r}{2}}} \} $ divisible by $T^r$ is zero.
\end{proof}
\setlength{\leftskip}{0pt}

The claim showed that the set $\{ u_r^{(k)} \}_{r \ge j_k}$ is a topological $\Zp$-basis for $I_k^{j_k}$, or in other words that $\overline {\{ u_r^{(k)} \}_{r \ge j_k} } = I_k^{j_k}$. Therefore the generic element spanning $E_n$ is of the type \[ \xi_1 \cdot \ldots \cdot \xi_M \textnormal{ with } \xi_k \in I_k^{j_k} = \overline {\{ u_r^{(k)} \}_{r \ge j_k} } \textnormal{ and } \sum_k j_k \mu(x_k) \ge n. \]
But now continuity of the product map in $\Zp[[N]]$ means that the product of the closures $\prod_k \overline {\{ u_r^{(k)} \}_{r \ge j_k} }$ is contained in the closure of the product $ \prod_k \{ u_r^{(k)} \}_{r \ge j_k}$ and this latter product coincide exactly with the $\Zp$-span of the $u(\mathbf j)$ having $\mu(\mathbf j) \ge n$.

Finally, the $\Zp$-linear independence of the $u(\mathbf j)$'s follows from the $\Zp$-linear independence of the single factors $u_{j_k}^{(k)}$ proved in claim \ref{Zpgroupalgebra}.
\end{proof}
We are finally ready to show the main theorem of this subsection - the bulk of the work has already been done.
\begin{thm} \label{envelopingalgebraiso} The map $\Theta: \mathcal U(\gr N) \lra \gr \Zp[[N]]$ defined as in formula \ref{envelopinmap} is an isomorphism of graded associative $\Zp$-Lie algebras.
\end{thm}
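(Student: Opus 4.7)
The plan is to compare two natural $\Zp$-bases on each graded piece, one coming from PBW on $\mathcal U(\gr N)$ and the other from Proposition \ref{augmentationpowersbases}, and show that $\Theta$ sends the first to the second modulo strictly higher augmentation degree. Since $\Theta$ is visibly a morphism of graded associative $\Zp$-algebras, it suffices to prove that for every $n \ge 0$ the induced map $\Theta_n: \mathcal U(\gr N)_n \lra I^n/I^{n+1}$ is a $\Zp$-module isomorphism.

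First, I would invoke the Poincar\'e--Birkhoff--Witt theorem for $\mathcal U(\gr N)$. Since $\gr N \cong \mathfrak n$ is a graded $\Zp$-Lie algebra which is free as a $\Zp$-module, with ordered homogeneous basis $B = \{x_1, \ldots, x_M\}$ of weights $\mu(x_k)$, PBW gives that the ordered monomials $x_1^{j_1} \cdots x_M^{j_M}$ (product taken in $\mathcal U(\gr N)$, with respect to our fixed ordering) form a $\Zp$-basis of $\mathcal U(\gr N)$; moreover the monomial corresponding to $\mathbf j = (j_1,\ldots,j_M)$ sits in graded degree $\mu(\mathbf j) = \sum_k j_k \mu(x_k)$. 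In particular, $\mathcal U(\gr N)_n$ is a finite free $\Zp$-module with basis indexed by those $\mathbf j$ with $\mu(\mathbf j)=n$.

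The heart of the argument is to compute $\Theta$ on these PBW basis elements. By construction, $\Theta(x_k) = (x_k - 1) + I^{\mu(x_k)+1}$, and since $\Theta$ is a ring map into the graded algebra, a lift of $\Theta(x_1^{j_1} \cdots x_M^{j_M})$ to $I^{\mu(\mathbf j)}$ is given by the ordered product $\prod_k (x_k - 1)^{j_k}$, taken in $\Zp[[N]]$. I would then show that modulo $I^{\mu(\mathbf j)+1}$ this lift agrees with the element $u(\mathbf j) = \prod_k x_k^{-\lceil j_k/2\rceil}(x_k-1)^{j_k}$ of Proposition \ref{augmentationpowersbases}. The point is that each inserted factor $x_k^{-\lceil j_k/2\rceil}$ satisfies $x_k^{-\lceil j_k/2\rceil} - 1 \in I^{\mu(x_k)}$; whenever $j_k \ge 1$ we may expand $x_k^{-\lceil j_k/2\rceil} = 1 + v_k$ with $v_k \in I^{\mu(x_k)}$, and the cross-terms produced by the inserted $v_k$'s land in $I^{\mu(\mathbf j) + \mu(x_k)} \subseteq I^{\mu(\mathbf j)+1}$, while terms with $j_k = 0$ contribute only the factor $u_0^{(k)} = 1$. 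Thus $\Theta(x_1^{j_1} \cdots x_M^{j_M}) = u(\mathbf j) + I^{\mu(\mathbf j)+1}$ in $\gr \Zp[[N]]_{\mu(\mathbf j)}$.

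Finally I would conclude. Proposition \ref{augmentationpowersbases} gives that $I^n$ is the (closed) $\Zp$-span of $\{ u(\mathbf j) : \mu(\mathbf j) \ge n\}$ and similarly for $I^{n+1}$; since only finitely many $\mathbf j$ have $\mu(\mathbf j)=n$, no closure is needed in the quotient, and $\{u(\mathbf j) + I^{n+1} : \mu(\mathbf j)=n\}$ is a $\Zp$-basis of $I^n/I^{n+1}$. Combined with the previous step, $\Theta_n$ sends the PBW basis of $\mathcal U(\gr N)_n$ bijectively onto this basis, so $\Theta_n$ is an isomorphism for every $n$, hence $\Theta$ is an isomorphism of graded associative $\Zp$-algebras. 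The main obstacle is the explicit computation in the previous paragraph, identifying $\prod_k (x_k-1)^{j_k}$ with $u(\mathbf j)$ modulo $I^{\mu(\mathbf j)+1}$ in a context where the factors do not commute; once this bookkeeping is done, the rest follows by counting bases.
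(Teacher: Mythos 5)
Your proof is correct and follows essentially the same route as the paper's: both invoke PBW to get a $\Zp$-basis of $\mathcal U(\gr N)_n$ in each degree, show that $\Theta$ sends the monomial with exponent vector $\mathbf j$ to $\prod_k(x_k-1)^{j_k}$, identify this with $u(\mathbf j)$ modulo $I^{\mu(\mathbf j)+1}$, and then appeal to Proposition~\ref{augmentationpowersbases} to conclude. The bookkeeping step you flag as the main obstacle is handled exactly as you outline (the paper condenses it to one line), and non-commutativity causes no trouble here because both products respect the same fixed ordering, so the extra factors $x_k^{-\lceil j_k/2\rceil}-1 \in I$ simply push every cross-term into $I^{\mu(\mathbf j)+1}$.
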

\begin{proof} Recall that $\Theta$ is induced by the map $\gr N \lra \gr \Zp[[N]]$ sending $x C^{n+1}(N) \mapsto (x-1) + I^{n+1}$. We will describe $\Zp$-basis for $\mathcal U(\gr N)$ and $\gr \Zp[[N]]$ and show that $\Theta$ sends one into the other.

The ordered basis $B$ for $\gr N$ provides, by the Poincar\'e-Birkhoff-Witt theorem, a $\Zp$-basis for $\mathcal U(\gr N)$ consisting of the monomials \[ x_1^{j_1} \cdot \ldots \cdot x_M^{j_M} \] for all $M$-uples of non-negative integers $\mathbf j = (j_1, \ldots j_m)$.
Moreover, this is a basis of homogeneous elements for the grading on $\mathcal U(\gr N)$ induced by the grading on $\gr N$, where \[ \deg \left( x_1^{j_1} \cdot \ldots \cdot x_M^{j_M} \right) = \sum_{k=1}^M j_k \mu(x_k). \]
In particular, a basis for the subspace of degree $n$ homogeneous elements of $\mathcal U(\gr N)$ is given by \begin{equation} \left\{ x_1^{j_1} \cdot \ldots \cdot x_M^{j_M} \right\} \textnormal{ such that } \sum_k j_k \mu(x_k) = n. \label{envelopingbasis}
\end{equation}

We switch our attention to $\gr \Zp[[N]]$. For each $M$-uple of non-negative integers $\mathbf j = (j_1, \ldots, j_m)$, we notice that \[ u(\mathbf j) \equiv v(\mathbf j) := (x_1-1)^{j_1} \cdot \ldots \cdot (x_M-1)^{j_m} \bmod I^{\mu(\mathbf j) +1} \] since when we compute $u(\mathbf j) - v(\mathbf j)$ we pick up an additional factor of $(x_k^{\pm 1} - 1)$, which is in $I$.

Therefore, a $\Zp$-basis\footnote{We can drop 'topological' since $I^n / I^{n+1}$ is a finite, free $\Zp$-module and thus we do not need to take the closure of a $\Zp$-span.} of $I^n / I^{n+1}$ is given by \begin{equation} \label{groupalgebrabasis} \{ v(\mathbf j) + I^{n+1} \} \textnormal{ such that } \mu(\mathbf j) = n. \end{equation}

Finally, we conclude by noticing that the map $\Theta$ gives \[ x_1^{j_1} \cdot \ldots \cdot x_M^{j_M} \mapsto (x_1-1)^{j_1} \cdot \ldots \cdot (x_M-1)^{j_M} = v(\mathbf j) \] so that the two $\Zp$-basis correspond to each other.
\end{proof}

\subsection{Comparison via a spectral sequence} \label{spectralsection}
In this subsection we introduce a spectral sequence which relates cohomology of (modules over) a filtered algebra and cohomology of (modules over) the associated graded algebra. This is a crucial ingredient to prove theorem \ref{groupLiecomparison}.

We start by recalling a general setup for filtered and graded rings and modules.

We follow Polo and Tilouine's ideas from \cite{PT}, but we will need take into account the topology of our profinite algebras (like $\Zp[[N]]$) as well. In particular, this calls for adapting the results of Grunenfelder (\cite{grunenfelder}) and Sjodin (\cite{sjodin}) to the topological setup.

Let then $A$ be a profinite topological ring (not necessarily commutative) equipped with a descending filtration $\{ F^n A\}_{n \ge 0}$ such that each $F^n A$ is a closed two-sided ideal in $A$ (for example, the filtration induced by the powers of a closed two-sided ideal). Every $A$-module is assumed to be a left $A$-module, every left $A$-module is assumed to have a topology for which the action map is continuous, and every morphism of left $A$-modules is assumed to be continuous.

We consider the category $\mathcal F_A$ of filtered (topological) left $A$-modules. All filtrations are decreasing and indexed by $\N$ (unless we say otherwise, see for example the filtration on $\Hom$-spaces below), and each $F^n M$ is closed in $M$. The action of $A$ is such that $F^h A . F^k M \subset F^{h+k} M$.

\begin{defn} Given a map of filtered $A$-modules $f:M \lra N$ we say that $f$ has filt-degree $k$ if $f(F^h M) \subset F^{h+k} N$ for all $h$ (where we assume $F^t N = N$ if the filtration on $N$ is indexed by $\N$ and $t < 0$).
\end{defn}
\begin{rem} Notice that the filt-degree of a morphism is not unique, if it exists: indeed if $f$ has filt-degree $k$ than it also has filt-degree $k-i$ for each $i \ge 1$, because the filtration on $N$ is descending.
\end{rem}
We denote by $\underline \Hom (M,N)$ the morphism of finite filt-degree, but as morphisms in the category $\mathcal F_A$ we only consider the morphisms of filt-degree $0$.

\begin{defn} A filt-degree $0$ morphism $f \in \Hom_{\mathcal F_A}(M,N)$ is said to be strict if $f(F^h M)  = \im f \cap F^h N$ for all $h$.
\end{defn}

\begin{defn} The filtration $\left\{ F^n M \right\}$ of an $A$-module $M$ is said to be \begin{itemize}
\item \emph{discrete} if $F^n M = 0$ for $n$ large enough.
\item \emph{complete} if $M = \varprojlim_n M / F^n M$.
\item \emph{exhaustive} if $M = \bigcup_n F^n M$. This is usually the case, if we set e.g. $F^0 M = M$ and $F^k M = F^k A . M$.
\item \emph{bounded} if $F^n M = M$ for some $n \in \Z$.
\end{itemize}
\end{defn}

We also have the corresponding concepts of graded topological ring, graded topological left modules, and so on. In particular if $A$ is a filtered topological ring, then $\gr A$ is a graded topological ring.
The mapping $M \mapsto \gr M$ defines a functor from $\mathcal F_A$ to $\mathcal G_{\gr A}$, the category of graded left $\gr A$-modules. If $B$ is a graded ring, the morphisms in the category $\mathcal G_B$ preserve the degree.

\begin{fact}[\cite{sjodin}, section 0]
A short exact sequence of strict morphisms in $\mathcal F_A$ stays exact upon applying $\gr$.
\end{fact}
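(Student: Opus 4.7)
Write the short exact sequence of strict filt-degree $0$ morphisms as
\[ 0 \lra M' \stackrel{f}{\lra} M \stackrel{g}{\lra} M'' \lra 0, \]
and recall that $\gr M = \bigoplus_n F^n M / F^{n+1} M$, with $\gr f$ and $\gr g$ induced piecewise from the inclusions $f(F^n M') \subset F^n M$ and $g(F^n M) \subset F^n M''$. The plan is to verify exactness one graded piece at a time, for each $n \ge 0$, of the candidate sequence
\[ 0 \lra F^n M' / F^{n+1} M' \stackrel{\gr f}{\lra} F^n M / F^{n+1} M \stackrel{\gr g}{\lra} F^n M'' / F^{n+1} M'' \lra 0. \]

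For injectivity of $\gr f$, I would pick $x \in F^n M'$ with $f(x) \in F^{n+1} M$; strictness of $f$ gives $f(F^{n+1} M') = f(M') \cap F^{n+1} M$, so $f(x) = f(x')$ for some $x' \in F^{n+1} M'$, and then injectivity of $f$ forces $x = x' \in F^{n+1} M'$. For surjectivity of $\gr g$, I would take $z \in F^n M''$; strictness and surjectivity of $g$ give $F^n M'' = g(M) \cap F^n M'' = g(F^n M)$, so any preimage of $z$ can be chosen in $F^n M$. For exactness in the middle, the composition $\gr g \circ \gr f$ clearly vanishes; conversely, given $y \in F^n M$ with $g(y) \in F^{n+1} M''$, strictness of $g$ lets me write $g(y) = g(y')$ for some $y' \in F^{n+1} M$, so $y - y' \in \ker g = \im f$, say $y - y' = f(x)$; since $f(x) \in F^n M$, strictness of $f$ together with injectivity promotes $x$ to lie in $F^n M'$, exhibiting the class of $y$ in $\gr M$ as in the image of $\gr f$.

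The argument is purely diagrammatic and does not actually use completeness, the profinite topology, or any boundedness hypothesis — the only inputs are strictness of $f$ and $g$ and the injectivity/surjectivity built into the original exact sequence. The main subtlety to keep an eye on is the definition of strictness when the filtration on the target is not indexed by $\N$ (the convention $F^t N = N$ for $t < 0$ in the excerpt); but since we only ever compare $F^n M$ with $F^n M''$ for $n \ge 0$, this causes no issue. I would therefore expect no real obstacle here; the content of the fact lies entirely in the correct bookkeeping of what ``strict'' buys us, and the proof can be read off by chasing the degree-$n$ subquotient diagram as above.
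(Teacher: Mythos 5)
Your proof is correct, and it is the standard diagram chase. The paper itself states this as a Fact cited to S\"odin, section 0, without reproducing a proof; the argument you give is exactly the one that source (and any textbook treatment of filtered modules) supplies. The three steps are carried out correctly: strictness of $f$ plus injectivity gives injectivity of each $\gr^n f$; strictness of $g$ plus surjectivity gives $g(F^nM)=F^nM''$ and hence surjectivity of each $\gr^n g$; and in the middle step you correctly use strictness of $g$ to adjust $y$ by an element of $F^{n+1}M$ so that the difference lies in $\ker g=\operatorname{im} f$, then strictness of $f$ (with injectivity) to promote the preimage to $F^nM'$. You are also right that no completeness, topological, or boundedness hypotheses enter; only strictness of both maps and the exactness of the original sequence are used. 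No gaps.
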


Notice that for each $M, N \in \mathcal F_A$ we can $\Z$-filter $\underline \Hom (M,N)$ by \[ F^n \underline \Hom (M,N) = \left\{ f: M \lra N \textnormal{ of filt-degree } n \right\}. \]
\begin{rem} \begin{enumerate}
    \item This filtration is exhaustive, since we defined $\underline \Hom (M,N)$ to be the morphisms respecting the filtration and having finite filt-degree (possibly negative).
    \item It is often the case that $\underline \Hom (M,N) \neq \Hom_A(M,N)$, i.e. that there are $A$-module maps $M \lra N$ which do not have a filt-degree (see section 4 of \cite{sjodin} for a counterexample).
\end{enumerate}
\end{rem}
We now want conditions to ensure that each $A$-module map $f: M \lra N$ has a finite filt-degree.

\begin{defn} \label{filtfreeandfg} Let $M \in \mathcal F_A$. \begin{itemize}
\item We say that $M$ is filt-free if it is a free object in $\Mod(A)$, and there is a basis $\{x_i\}$ with integers $h(i)$ such that \[ F^n M = \sum_i F^{n-h(i)}A . x_i. \qquad \forall n. \]
Equivalently, $M$ is a direct sum of shifts\footnote{Explicitly, for a filtered $A$-module $M$, the shift $M^{(h)}$ is the filtered $A$-module defined by $F^n M^{(h)} = F^{n-h}M$.} of $A$, as mentioned in section 3.4 of \cite{PT}.
\item We say that $M$ is filt finitely generated (filt f.g.) if there exists a finite set $\{x_i, h(i)\}$ with $x_i \in M$ and $h(i) \in \Z$ such that \[ F^n M = \sum_i F^{n-h(i)} A. x_i \qquad \forall n. \]
\end{itemize}
\end{defn}

\begin{lem}[Lemma 15 in \cite{sjodin}] Let $M,N \in \mathcal F_A$. Suppose that $M$ is filt f.g. and that the filtration on $N$ is exhaustive. Then \[ \Hom_A(M,N) = \underline \Hom (M,N). \]
\end{lem}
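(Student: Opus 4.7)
The plan is to prove the lemma by a direct verification: given any $A$-module map $f : M \to N$, use the finiteness of a filt-generating set for $M$ together with exhaustiveness of the filtration on $N$ to produce a uniform integer $k \in \Z$ bounding the filt-degree of $f$.

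More concretely, first invoke the assumption that $M$ is filt f.g.\ to fix a finite collection $\{(x_i, h(i))\}_{i=1}^r$ with $x_i \in M$, $h(i) \in \Z$, and
\[ F^n M = \sum_{i=1}^r F^{n - h(i)} A \cdot x_i \qquad \forall n \in \N. \]
Next, for each generator $x_i$ the image $f(x_i)$ is some fixed element of $N$. Because the filtration on $N$ is exhaustive, there exists $k_i \in \Z$ (or $k_i \in \N$, if the filtration on $N$ is $\N$-indexed, in which case $k_i = 0$ always works) such that $f(x_i) \in F^{k_i} N$. Since there are only finitely many generators, the integer
\[ k := \min_{1 \le i \le r} \bigl( k_i - h(i) \bigr) \]
is well-defined and finite.

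Finally I would verify that $f$ has filt-degree $k$. Given $m \in F^n M$, use the decomposition above to write $m = \sum_i a_i x_i$ with $a_i \in F^{n - h(i)} A$ (using the convention $F^t A = A$ for $t < 0$ in the $\N$-indexed case). Then $A$-linearity of $f$ gives
\[ f(m) = \sum_{i=1}^r a_i \cdot f(x_i) \in \sum_{i=1}^r F^{n - h(i)} A \cdot F^{k_i} N \subset \sum_{i=1}^r F^{n - h(i) + k_i} N \subset F^{n+k} N, \]
where the last inclusion follows from $n - h(i) + k_i \ge n + k$ by the choice of $k$ and the fact that the filtration on $N$ is decreasing. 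This shows $f(F^n M) \subset F^{n+k} N$ for all $n$, so $f \in F^k \underline{\Hom}(M,N) \subset \underline{\Hom}(M,N)$, as required.

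The reverse inclusion $\underline{\Hom}(M,N) \subset \Hom_A(M,N)$ is by definition, since morphisms in $\underline{\Hom}(M,N)$ are in particular $A$-module maps. The only real content is in finding $k$, and the hard step (very mild here) is ensuring the minimum is taken over a finite set, which is precisely why filt finite generation of $M$, rather than merely filt-freeness on a possibly infinite basis, is essential; without it one could have $k_i - h(i) \to -\infty$ and no finite filt-degree would exist. Note that no topological considerations enter, because the identity $F^n M = \sum_i F^{n-h(i)} A \cdot x_i$ in Definition \ref{filtfreeandfg} is an honest (not closed) algebraic sum.
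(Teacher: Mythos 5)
Your argument is correct: evaluating an arbitrary $A$-linear map on a finite filt-generating set $\{x_i,h(i)\}$, using exhaustiveness of the filtration on $N$ to place each $f(x_i)$ in some $F^{k_i}N$, and taking $k=\min_i(k_i-h(i))$ gives a uniform finite filt-degree, with the negative-index conventions handled exactly as the paper sets them up. The paper itself offers no proof here (it quotes Lemma 15 of Sj\"odin), and your computation is the same generator-by-generator bound that the paper carries out in its proof of Proposition \ref{Homdiscretefilt}, so there is nothing to add.
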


We also have a map \[ \phi(M,N): \gr \underline \Hom (M,N) \lra \Hom_{\gr A} \left( \gr M, \gr N \right) \] defined in the obvious way, and functorial in $M$ and $N$. 

\begin{lem}[Lemma 16 in \cite{sjodin}] \label{grHomcommute}
Suppose $M$ is filt-free. Then $\phi(M,N)$ is an isomorphism for all $N \in \mathcal F_A$.
\end{lem}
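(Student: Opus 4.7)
The plan is to reduce everything to the basis structure of $M$. Since $M$ is filt-free with basis $\{x_i\}$ and assigned degrees $\{h(i)\}$, equivalently $M$ is a (possibly infinite) direct sum of shifted copies $A^{(h(i))}$, any $A$-linear morphism $f: M \to N$ is determined by its values $f(x_i) \in N$. My first task would be to translate the filt-degree filtration into a condition on these values: a morphism $f \in \underline\Hom(M,N)$ lies in $F^k \underline\Hom(M,N)$ if and only if $f(x_i) \in F^{k+h(i)} N$ for every $i$. One direction is immediate since $x_i \in F^{h(i)} M$; the converse follows from the filt-free decomposition $F^n M = \sum_i F^{n-h(i)} A \cdot x_i$ together with the inclusion $F^h A \cdot F^k N \subset F^{h+k} N$.

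On the graded side, $\gr M$ is graded-free over $\gr A$ on the images $\bar x_i$ in degree $h(i)$, so a degree-$k$ homogeneous morphism $g : \gr M \to \gr N$ is determined by prescribing $g(\bar x_i) \in F^{k+h(i)} N / F^{k+h(i)+1} N$ for each $i$. Matching this up with the description of step one, the map $\phi(M,N)$ sends the class $f + F^{k+1}\underline\Hom(M,N)$ to the degree-$k$ morphism $\gr f$ with $(\gr f)(\bar x_i) = \overline{f(x_i)}$. Injectivity is then immediate: if $\gr f = 0$, then each $f(x_i)$ sits in $F^{k+h(i)+1} N$, whence step one forces $f \in F^{k+1}\underline\Hom(M,N)$.

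For surjectivity I would lift. Given a homogeneous $g$ of degree $k$, pick representatives $n_i \in F^{k+h(i)} N$ for each $g(\bar x_i)$ and use the universal property of the filt-free module $M$ to define $f \in \Hom_A(M,N)$ by $f(x_i) = n_i$. Step one guarantees $f \in F^k \underline\Hom(M,N)$, and by construction $\phi(f) = g$.

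The main obstacle I anticipate is not the module-theoretic content, which is essentially the universal property of a free module, but the topological/continuity bookkeeping implicit in Sjodin's setup: one must check that the morphism $f$ produced from prescribed values on a possibly infinite basis is genuinely continuous, and that the filtration inclusions $F^h A \cdot F^k N \subset F^{h+k} N$ interact well with the closedness of the $F^k$. Since the paper only ever applies this lemma to resolutions that are filt f.g.\ or built out of finitely many shifts, I would in practice be safe to reduce to the finite-basis case where the compatibility is automatic; but a fully general proof would need either to topologize $\underline\Hom(M,N)$ as a product and argue continuity coordinate-wise, or to invoke lemma 15 from Sjodin already cited as \emph{Lemma 15 in \cite{sjodin}} to sidestep the distinction.
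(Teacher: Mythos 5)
The paper gives no proof of this lemma; it is stated purely as a citation to Sj\"odin's Lemma 16. Your argument reconstructs the standard proof and it is correct: the characterization in your ``step one'' is right (the forward direction is $x_i \in F^{h(i)}M$ together with $f$ having filt-degree $k$; the converse uses the defining equality $F^n M = \sum_i F^{n-h(i)}A \cdot x_i$, the fact that $\{x_i\}$ is an $A$-basis so coordinates are unique and hence lie in the stated filtration pieces, and $F^h A \cdot F^k N \subset F^{h+k}N$), the identification of $\gr M$ as graded-free on $\{\bar x_i\}$ follows because quotients commute with the direct-sum decomposition $F^n M = \bigoplus_i F^{n-h(i)}A \cdot x_i$, and the injectivity/surjectivity steps go through as you describe.

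One small inaccuracy worth flagging: the topological concern you raise is not ``implicit in Sj\"odin's setup'' — Sj\"odin's paper is purely algebraic, and his Lemma 16 makes no continuity hypothesis. The continuity requirement is imposed by \emph{this} paper, which deliberately re-reads Sj\"odin's filtered-module results in a profinite-topological category (``this calls for adapting the results of Gr\"unenfelder and Sj\"odin to the topological setup''). So the genuine gap, if one wanted a proof valid at the level of generality the paper states the lemma, is exactly the surjectivity step for a module $M$ with an infinite filt-free basis: you must verify that the lifted $f$ defined by $f(x_i)=n_i$ is continuous, which is not automatic. Your fallback — observe that every invocation in the paper is for filt f.g.\ modules, i.e.\ finite direct sums of shifts, where continuity is automatic — is the right practical resolution, and matches how the paper actually uses the lemma (via Proposition~\ref{Homdiscretefilt} and the finitely generated resolutions from Proposition~\ref{strictresolutionexists}).
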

%
%
\begin{prop}[Lemmas 1 and 2 in \cite{sjodin} and remark 5.2.2 in \cite{NVO}, see also proposition (C) in \cite{PT}] \label{strictresolutionexists}
Every $M \in \mathcal F_A$ admits a filt-free resolution $L_{\bullet} \lra M \lra 0$ by strict morphisms (called a \emph{strict resolution}).

If the filtration on $A$ is exhaustive and complete and the associated graded $\gr A$ is Noetherian, then every $M \in \mathcal F_A$ such that $\gr M$ is finitely generated admits a strict resolution by filt-free, f.g. modules $L_i$.

If $M$ is filt f.g. and has an exhaustive filtration then we can also choose a strict resolution by filt-free, f.g. modules $L_i$.
\end{prop}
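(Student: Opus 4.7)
The plan is to build the resolution one strict filt-free step at a time, passing each time to the kernel with its induced subspace filtration $F^nK=K\cap F^nL$, and then iterating. A useful preliminary observation is that if $L_{i+1}\twoheadrightarrow K_i$ is a strict surjection and $K_i\hookrightarrow L_i$ is the kernel of the previous differential with the induced filtration, then the composite differential $L_{i+1}\to L_i$ is automatically strict: its image is $K_i$, and the filtration on $K_i$ coming from $L_{i+1}$ (via strictness of the surjection) matches the one induced from $L_i$. Therefore the entire proof reduces to: construct, at each stage, a strict filt-free surjection $L\twoheadrightarrow M$ with whichever finiteness properties are required.

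For the first (general) assertion, given $M\in\mathcal F_A$ I would set $L_0=\bigoplus_{m\in M\setminus\{0\}} A^{(h(m))}$ where $h(m)$ is the largest integer with $m\in F^{h(m)}M$, together with the obvious surjection $e_m\mapsto m$. This is surjective tautologically, and strict because any $m\in F^nM$ is in the image of its own generator $e_m\in F^nL_0$. Taking the kernel with the induced filtration and repeating yields the resolution. For the second assertion, assuming $\gr A$ Noetherian and $\gr M$ finitely generated, I would lift a finite homogeneous generating set $\bar x_1,\ldots,\bar x_r$ of $\gr M$ (with degrees $h(i)$) to elements $x_i\in F^{h(i)}M$ and set $L_0=\bigoplus_i A^{(h(i))}$. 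By construction the induced graded map $\gr L_0\to\gr M$ is surjective; to upgrade this to a strict surjection $L_0\twoheadrightarrow M$ I would use a successive-approximation procedure: given $m\in F^nM$, pick a preimage of its class in $F^nM/F^{n+1}M$, subtract, and iterate, invoking exhaustiveness and completeness of the filtration on $A$ (hence on the f.g. module $L_0$) to ensure convergence in the profinite topology. Then $\gr\ker(L_0\to M)$, being a submodule of the finitely generated $\gr L_0$, is itself finitely generated by Noetherianness of $\gr A$, so iteration preserves the hypotheses. The third assertion follows from the same construction applied directly to the generators $(x_i,h(i))$ witnessing filt finite generation, which makes strictness of $L_0\twoheadrightarrow M$ immediate from Definition \ref{filtfreeandfg}, and one then reduces to the f.g. case on the kernel (noting that filt f.g. forces $\gr M$ to be generated over $\gr A$ by the images of the $x_i$).

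The principal obstacle I expect is the topological bookkeeping in the f.g. case: because $A$ is profinite and each $F^nA$ is merely closed, the successive-approximation construction of preimages must be justified by verifying that partial sums actually converge to an element of $L_0$ and that the resulting map respects filtrations on the nose, not merely up to closure. This is exactly the refinement over Sjödin's purely algebraic argument that necessitates citing both \cite{sjodin} and the topological treatment in \cite{NVO}. A secondary check, minor but worth making explicit, is that the kernel of a strict map between filtered modules with complete exhaustive filtrations again has a complete exhaustive filtration, so that the inductive setup really does propagate along the resolution.
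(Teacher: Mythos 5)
Your reconstruction follows essentially the same route as the paper's, which simply cites Sj\"odin's lemmas~1 and~2 for the first two assertions and NVO remark~5.2.2 for the one-step strict filt-free f.g.\ epimorphism, then says ``replacing $M$ by $\ker f_0$ \ldots\ and iterating'' --- exactly the inductive scheme you set up via the observation that a strict surjection composed with the inclusion of a kernel (with induced filtration) is again strict. Two remarks. First, in your direct construction for the general case, $h(m)$ (the largest $n$ with $m\in F^nM$) may fail to exist when the filtration on $M$ is not separated; the usual fix is to index the generating family by pairs $(m,n)$ with $m\in F^nM$ rather than by $m$ alone. Second, your handling of the third assertion --- build the first step straight from the defining generators $(x_i,h(i))$, then fall back to the second assertion on the kernel --- is the more careful reading: the kernel of a strict epimorphism from a filt-free f.g.\ module onto a filt f.g.\ module need not itself be filt f.g., so the Noetherian hypothesis on $\gr A$ (guaranteeing $\gr(\ker f_0)$ is finitely generated, and hence that part~2 applies to the kernel) is genuinely needed to propagate the iteration. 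The paper's proof of the third assertion does not make this explicit, and one should read it as standing under the hypotheses of the second; your proposal already incorporates that reading.
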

\begin{proof}
The first two statements are taken verbatim from the aforementioned sources. Remark 5.2.2 says that if $M \in \mathcal F_A$ is filt f.g. and has an exhaustive filtration, then it admits a strict epimorphism $f_0: L \lra M \lra 0$ with $L$ filt-free and finitely generated. Replacing $M$ by $\ker f_0$ with the induced filtration as a submodule of $L$ (see section I.2 in \cite{NVO}) and iterating yields the required filt-free, f.g. resolution.
\end{proof}
Suppose now that $M \in \mathcal F_A$ is filt f.g. and that either of the latter two statements of the proposition applies, so that $M$ admits a strict, filt-free, f.g. resolution: \[ \ldots \lra L_n \stackrel{f_n}{\lra} \ldots \lra L_1 \stackrel{f_1}{\lra} L_0 \stackrel{f_0}{\lra} M \lra 0, \] recall in particular that each $f_i$ has filt-degree $0$.

Fix $N \in \mathcal F_A$ and suppose its filtration is exhaustive. Then applying $\Hom_A ( - , N )$ to the above resolution yields the complex \[ 0 \lra \Hom_A(M,N) \lra \Hom_A (L_0, N) \stackrel{d_0}{\lra} \Hom_A (L_1, N) \stackrel{d_1}{\lra} \ldots \lra \Hom_A(L_n, N ) \stackrel{d_n}{\lra} \ldots \] whose homology is, by definition, $\Ext^*_A(M,N)$ since each $L_i$ is a free (hence projective) $A$-module.

On the other hand, our previous lemmas and the assumption that each $L_i$ is filt f.g. says that $\Hom_A(L_i,N) = \underline \Hom (L_i, N)$ is filtered.
We obtain then that the complex $\Hom_A (L_{\bullet}, N)$ is a filtered complex, since the differentials $d_i$ are compatible with the filtrations (the $d_i$'s have filt-degree $0$ because the $f_i$'s do).

We can consider then the spectral sequence of this filtered complex: following appendix A of \cite{RZ} we have that the $E_1$-page is the homology of the associated graded complex.
More precisely, applying the functor $\gr$ we obtain an associated graded complex \[ 0 \lra \gr \underline \Hom (L_0, N) \stackrel{\gr d_0}{\lra} \gr \underline \Hom (L_1, N) \stackrel{\gr d_1}{\lra} \ldots \lra \gr \underline \Hom (L_n, N) \stackrel{\gr d_n}{\lra} \ldots \] and then $E_1^{r,s} = H^{r+s} \left( \gr \underline \Hom (L_{\bullet}, N) \right)_r$, that is to say, the $(r+s)$-th homology of the complex of degree-$r$ graded pieces.

The spectral sequence has $E_{\infty}$-page being the homology groups of the complex $\Hom_A(L_{\bullet}, N)$, with grading induced by the filtration of the complex. More precisely, $E_{\infty}^{r,s} = ( H^{r+s} \left( \Hom_A(L_{\bullet}, N) \right)_r )$.

As explained in theorem A.3.1(b) of \cite{RZ}, for the spectral sequence to converge it suffices for the filtration on the complex $\Hom_A (L_{\bullet}, N)$ to be bounded and discrete: for each $n$ we want integers $u(n) < v(n)$ such that $F^{u(n)} \Hom_A (L_n, N) = \Hom_A (L_n, N)$ (bounded) and $F^{v(n)} \Hom_A(L_n, N) = 0$ (discrete).

\begin{prop} \label{Homdiscretefilt}
Suppose that $N$ has a bounded and discrete filtration, and that $L$ is filt-free, direct sum of shifts of bounded degree. Then $\Hom_A(L,N) = \underline \Hom(L,N)$ has bounded and discrete filtration.
\end{prop}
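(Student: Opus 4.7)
The plan is to reduce the statement to an explicit computation using the filt-free structure of $L$. Write $L = \bigoplus_{i \in I} A^{(h(i))}$ as a direct sum of shifted copies of $A$; the hypothesis that the shifts are of bounded degree means $a \le h(i) \le b$ for some fixed integers $a \le b$. As an $A$-module (forgetting filtrations), we then have $\Hom_A(L, N) \cong \prod_{i \in I} N$ via $f \mapsto (f(x_i))_i$, where $\{x_i\}$ is the distinguished filt-free basis. The proof comes down to reading off the filt-degree under this identification.

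Next I would characterize the induced filtration. Unwinding the definition, an $A$-linear map $f: L \to N$ has filt-degree $n$ iff $f(F^k L) \subset F^{k+n} N$ for all $k$. Since $F^k L = \sum_i F^{k-h(i)} A \cdot x_i$, and using the defining compatibility $F^{k-h(i)} A \cdot F^{h(i)+n} N \subset F^{k+n} N$, this is equivalent to the pointwise condition $f(x_i) \in F^{h(i)+n} N$ for every $i$. Under the identification above one thus obtains
\[
F^n \underline{\Hom}(L, N) \;\cong\; \prod_{i \in I} F^{h(i)+n} N.
\]

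To finish, I would pick integers $u, v$ with $F^u N = N$ and $F^v N = 0$, which exist by the hypothesis that the filtration on $N$ is bounded and discrete. For $n \le u - b$ the inequality $h(i) + n \le u$ holds uniformly in $i$, so $F^{h(i)+n} N = N$ for all $i$; hence every $A$-linear $f$ is automatically in $F^{u-b} \underline{\Hom}(L, N)$, which simultaneously yields $\underline{\Hom}(L, N) = \Hom_A(L, N)$ and the desired boundedness. Symmetrically, for $n \ge v - a$ we get $F^{h(i)+n} N = 0$ uniformly in $i$, so $F^n \underline{\Hom}(L, N) = 0$, establishing discreteness. There is no real obstacle; the essential point, and precisely where the bounded-degree hypothesis on the shifts enters, is that it converts the pointwise conditions $F^{h(i)+n} N = N$ and $F^{h(i)+n} N = 0$ into uniform ones, giving finite integers $u - b$ and $v - a$ as the two extreme filt-degrees.
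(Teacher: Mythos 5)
Your proof is correct and follows essentially the same route as the paper: both exploit the uniform bounds on the shift degrees $h(i)$ together with a chosen $k$ (your $u$, $v$) for $N$ to show every $A$-linear map has a uniform finite filt-degree, which gives $\Hom_A(L,N)=\underline\Hom(L,N)$ and boundedness. The only difference is that you also obtain discreteness directly from your explicit identification $F^n\underline\Hom(L,N)\cong\prod_i F^{h(i)+n}N$, whereas the paper delegates that half to Lemma 13 of Sj\"odin; your version is slightly more self-contained but not substantively different.
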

\begin{proof}
The assumption on $L$ means that in definition \ref{filtfreeandfg}, $L$ is a (possible infinite) direct sum of shifts $A^{(h(j))}$ with $h(j)$ bounded above and below. We fix such bounds $h \le h(j) \le H$.

We have then $F^h L = \bigoplus_{j \in J} F^{h-h(j)} A  = \bigoplus_{j \in J} A = L$, so that the filtration on $L$ is bounded. In particular, it is exhaustive and hence lemma 13 in \cite{sjodin} guarantees that $\underline \Hom(L,N)$ has a discrete filtration.

Let now $f \in \Hom_A(L,N)$ and $x \in F^n L = \bigoplus_{j \in J} F^n A^{(h(j))}$, so $x = \sum_j x_j$ is a finite sum with $x_j \in F^n A^{(h(j))} = F^{n-h(j)} A$.
Fix $k$ such that $F^k N = N$, which exists as the filtration on $N$ is bounded. Then we have \[ f(x_j) = x_j f(1_j) \in F^{n-h(j)} A. N = F^{n-h(j)} A . F^k N \subset F^{n-h(j)+k} N \subset F^{n-H+k} N \] where the last inclusion is due to $H \ge h(j)$ and the filtration on $N$ being decreasing.

In particular, $f(x) = \sum_j f(x_j) \in F^{n-H+k} N$ and thus $f$ has filt-degree $k-H$. This shows that the every $A$-morphism has finite filt-degree, so that $\Hom_A(L,N) = \underline \Hom (L,N)$, and that the filtration is bounded.
\end{proof}
%
The proposition clearly applies to each $L = L_i$ obtained above, since those are filt-free, f.g. modules, so they are finite direct sums. In particular, we showed that the spectral sequence is convergent.

Consider now the $E_1$-page of the spectral sequence, having $E_1^{r,s} = H^{r+s} \left( \gr \underline \Hom (L_{\bullet}, N) \right)_r$.
Since $\gr \underline \Hom (L_{\bullet}, N) \cong \Hom_{\gr A} \left( \gr L_{\bullet}, \gr N \right)$ by lemma \ref{grHomcommute}, and $\gr L_n$ is a free $\gr A$-module for each $n$, this graded complex is the one computing the higher derived functors of $\Hom_{\gr A} \left( \gr M, \gr N \right)$.
In other words, $H^{r+s} \left( \Hom_{\gr A} \left( \gr L_{\bullet}, \gr N \right) \right)_r \cong \Ext^{r+s}_{\gr A} \left( \gr M, \gr N \right)_r$.

We proved the following theorem:
\begin{thm} \label{abstractspectral} Let $M \in \mathcal F_A$ be such that it admits a strict resolution by filt-free, f.g. modules $L_{\bullet} \lra M \lra 0$. Let $N \in \mathcal F_A$ have a bounded and discrete filtration.
Then there exists a cohomological spectral sequence with first page \[ E_1^{r,s} = \Ext^{r+s}_{\gr A} \left( \gr M, \gr N \right)_r \Rightarrow E_{\infty}^{r,s} = \Ext_A^{r+s} \left(M, N \right)_r \] converging to the graded module associated to the filtered $\Ext_A^{r+s} \left( M,N \right)$.
\end{thm}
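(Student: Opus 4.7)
The plan is to apply the spectral sequence of a filtered cochain complex to $\Hom_A(L_\bullet, N)$, where $L_\bullet \lra M \lra 0$ is a strict, filt-free, finitely generated resolution. First, invoke Proposition~\ref{strictresolutionexists} to fix such a resolution
\[ \ldots \lra L_1 \stackrel{f_1}{\lra} L_0 \stackrel{f_0}{\lra} M \lra 0, \]
in which each $L_i$ is filt-free and finitely generated and each $f_i$ has filt-degree $0$. Applying $\Hom_A(-, N)$ yields a cochain complex whose cohomology computes $\Ext^*_A(M, N)$, since each $L_i$ is $A$-free and hence projective. The lemma of Sjodin quoted above identifies $\Hom_A(L_i, N)$ with $\underline\Hom(L_i, N)$ (using that $L_i$ is filt f.g.\ and $N$ has exhaustive filtration), endowing the complex with a $\Z$-filtration by filt-degree. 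Since the differentials are pullbacks of the filt-degree-$0$ maps $f_i$, they also have filt-degree $0$, so this is a genuine filtered cochain complex.

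Next, establish convergence. Each $L_i$ being filt-free and finitely generated, it is a \emph{finite} direct sum of shifts of $A$ of bounded degree; together with the bounded and discrete hypothesis on $N$, Proposition~\ref{Homdiscretefilt} yields that each $\Hom_A(L_i, N)$ has a bounded and discrete filtration. The standard theory of spectral sequences of filtered cochain complexes, e.g.\ Theorem~A.3.1(b) of \cite{RZ}, then guarantees that the associated spectral sequence converges to the graded object $E_\infty^{r,s} = \Ext^{r+s}_A(M,N)_r$ coming from the induced filtration on $\Ext^*_A(M, N)$.

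Finally, identify the $E_1$-page. Because the resolution consists of strict morphisms, the Fact quoted in the excerpt ensures that $\gr L_\bullet \lra \gr M \lra 0$ remains exact, hence is a resolution of $\gr M$ over $\gr A$. Each $\gr L_i$ is a free $\gr A$-module (as $L_i$ is filt-free), and by Lemma~\ref{grHomcommute} there is an isomorphism $\gr \underline\Hom(L_i, N) \cong \Hom_{\gr A}(\gr L_i, \gr N)$ compatible with the differentials. The $(r+s)$-th cohomology of the degree-$r$ piece of $\Hom_{\gr A}(\gr L_\bullet, \gr N)$ therefore computes $\Ext^{r+s}_{\gr A}(\gr M, \gr N)_r$, giving the advertised $E_1$-page.

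The main difficulty is not conceptual but bookkeeping: strictness of the resolution is used twice—to ensure that the Fact applies (so $\gr L_\bullet \lra \gr M$ is exact) and to force the induced differentials to have filt-degree zero so that Lemma~\ref{grHomcommute} upgrades to an isomorphism of graded complexes. Likewise, the hypothesis of Proposition~\ref{Homdiscretefilt} (``filt-free, direct sum of shifts of bounded degree'') must be matched with the observation that filt-free and f.g.\ implies a \emph{finite} direct sum of shifts, which automatically gives the bounded-degree condition. Once these alignments are verified, the spectral sequence of a filtered complex runs unchanged in the topological profinite setting.
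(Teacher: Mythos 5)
Your proposal is correct and follows essentially the same route as the paper: a strict filt-free f.g.\ resolution, Sjodin's lemma to identify $\Hom_A(L_i,N)$ with $\underline\Hom(L_i,N)$, Proposition~\ref{Homdiscretefilt} for boundedness and discreteness (hence convergence via Theorem A.3.1(b) of \cite{RZ}), and Lemma~\ref{grHomcommute} together with exactness of $\gr$ on strict morphisms to identify the $E_1$-page with $\Ext^{r+s}_{\gr A}(\gr M,\gr N)_r$. No substantive differences from the paper's argument.
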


Just like Polo and Tilouine in section 3 of \cite{PT}, we also discuss an equivariant version. From now on we assume that $A$ is a $\Zp$-algebra, for ease of notation.
Suppose that we have a group of automorphisms $\Lambda$ acting on the filtered $\Zp$-algebra $A$, i.e. a subgroup of $\Aut_{\Zp}(A)$ that preserves the filtration.

We can then consider the `smash product' $\Z_p[\Lambda] \otimes_{\Zp} A$.
The ring structure on this smash product is given by \[ (\lambda_1 \otimes a_1) (\lambda_2 \otimes a_2) = \lambda_1 \lambda_2 \otimes a_1 \lambda_1(a_2). \]
We denote the smash product by $\Lambda \sharp A$. Notice that the ring embedding $\phi: A \lra \Lambda \sharp A$ sending $a \mapsto 1 \otimes a$ turns $\Lambda \sharp A$ into a free right $A$-module.
Explicitly, $A^{\oplus \Lambda} \stackrel{\sim}{\lra} \Lambda \sharp A$ via the map $(a_{\lambda}) \mapsto \sum_{\lambda} \lambda \otimes \lambda(a_{\lambda})$.

We can put on $\Lambda \sharp A$ the filtration induced by $\phi$, in the sense that $F^n( \Lambda \sharp A)$ is the two-sided ideal generated by $\phi(F^n A)$.
Restriction of scalars along the embedding $A \lra \Lambda \sharp A$ induces a functor $\mathcal F_{\Lambda \sharp A} \lra \mathcal F_A$, so in particular every filtered $\Lambda \sharp A$-module\footnote{Equivalently, a (filtered) $\Lambda \sharp A$-module is a (filtered) $A$-module $M$ with a $\Lambda$-action (preserving the filtration) such that $\lambda(a.m) = \lambda(a).\lambda(m)$ for all $a \in A$, $\lambda \in \Lambda$ and $m \in M$.} can be considered as a filtered $A$-module.
\begin{rem}
Notice that the actual filtration on $M \in \mathcal F_{\Lambda \sharp A}$ does not change, and hence the abelian group $\gr M$ is unambiguously defined, whether we are considering it as a $\gr A$-module or as a $\gr (\Lambda \sharp A)$-module.
\end{rem}

\begin{prop} \label{inducedidealfiltration}
Suppose that $F^n A = I^n$ for a fixed (two-sided) ideal $I$ of $A$. Let $I'$ be the right ideal of $\Lambda \sharp A$ generated by $I$.
Then $I'$ is a two-sided ideal of $\Lambda \sharp A$, and the filtration induced by $\phi$ is the $I'$-adic filtration.
\end{prop}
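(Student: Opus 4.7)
The plan is to explicitly identify both filtrations with the common subset
\[ J_n := \bigoplus_{\lambda \in \Lambda} \lambda \otimes I^n \subset \Lambda \sharp A. \]
The key input is that $\Lambda$ acts on $A$ by filtration-preserving automorphisms, so for every $\mu \in \Lambda$ and $n \ge 1$ the inclusion $\mu(I^n) \subseteq I^n$, together with the same inclusion applied to $\mu^{-1}$, yields $\mu(I^n) = I^n$.

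First I would show that $I'$ is two-sided and equal to $J_1$. Using $(1 \otimes i)(\lambda \otimes a) = \lambda \otimes ia$, the right ideal generated by $\phi(I) = 1 \otimes I$ is $\bigoplus_\lambda \lambda \otimes IA = J_1$ (since $I$ is a right ideal of $A$). To check $J_1$ is also a left ideal, compute $(\mu \otimes b)(\lambda \otimes i) = \mu\lambda \otimes b\,\mu(i)$ and note that $b\,\mu(i) \in A \cdot \mu(I) = AI \subseteq I$, so the product lies in $\mu\lambda \otimes I \subset J_1$.

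Next I would prove $(I')^n = J_n$ by induction on $n$. The inclusion $(I')^n \subseteq J_n$ follows by iterating the multiplication rule: a product of $n$ generators $(\lambda_k \otimes i_k)_{k=1}^n$ with $i_k \in I$ equals
\[ \lambda_1 \cdots \lambda_n \otimes i_1 \cdot \lambda_1(i_2) \cdot (\lambda_1\lambda_2)(i_3) \cdots (\lambda_1 \cdots \lambda_{n-1})(i_n), \]
whose right tensor factor lies in $I^n$ since each $(\lambda_1 \cdots \lambda_k)(i_{k+1}) \in I$. Conversely, any generator $\lambda \otimes i_1 i_2 \cdots i_n$ of $J_n$ can be rewritten as
\[ (\lambda \otimes i_1)(1 \otimes \lambda^{-1}(i_2))(1 \otimes \lambda^{-1}(i_3)) \cdots (1 \otimes \lambda^{-1}(i_n)), \]
using that $\lambda^{-1}$ preserves $I$ and is a ring homomorphism; this exhibits the tensor as an element of $(I')^n$, so $\Zp$-linear combinations give $J_n \subseteq (I')^n$.

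Finally, applying the same two-step argument with $I$ replaced throughout by $I^n$ shows that the two-sided ideal generated by $\phi(I^n) = 1 \otimes I^n$ also equals $J_n$; this two-sided ideal is precisely $F^n(\Lambda \sharp A)$ by definition of the $\phi$-induced filtration, giving the required equality $F^n(\Lambda \sharp A) = (I')^n$. The argument is essentially careful bookkeeping with the twisted product, and I do not anticipate any substantive obstacle; the only conceptual input needed is the Galois-invariance $\mu(I^n) = I^n$ coming from the assumption that $\Lambda$ preserves the filtration on $A$.
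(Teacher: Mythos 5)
Your proposal is correct and takes essentially the same approach as the paper: the paper's (terse) proof also reduces everything to the twisted multiplication rule and the $\Lambda$-stability of $I^n$, proving by induction that $(I')^n$ equals the one-sided (hence two-sided) ideal generated by $\phi(I^n)$. Your explicit identification of both filtrations with $\bigoplus_{\lambda \in \Lambda} \lambda \otimes I^n$ simply fleshes out the paper's ``easy check'' and inductive step, with no gaps.
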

\begin{proof}
The first part of the claim is an easy check.
For the second, we need to show that if $I' = (\Lambda \sharp A)I$, then $(I')^n = (\Lambda \sharp A)I^n$. The definition of $I'$ settles the base case of the induction: for the inductive step, one needs to show that if \[ \sum_i t_i \hat t_i \in (I')^{n+1} \textnormal{ with } t_i \in I', \, \hat t_i \in (I')^n, \] then we can write this sum as a linear combination of elements $(\lambda \otimes a) (1 \otimes \tau) \in (\Lambda \sharp A) (1 \otimes I^n)$. Obviously it suffices to treat the case of a single product $t \hat t$, and then use the induction assumption and factor the sums to finish the proof.
\end{proof}
Notice that if $M$ and $N$ are filtered $\Lambda \sharp A$-modules, then the group $\Hom_A(M,N)$ has the structure of a (left) $\Lambda$-module: \[ \lambda.f (m) = \lambda \left( f (\lambda^{-1}m ) \right) \qquad \forall \lambda \in \Lambda, \, \forall f \in \Hom_A(M,N). \]
This action preserves the submodule $\underline \Hom_{\mathcal F_A} \left(M,N \right)$ of morphisms of finite filt-degree, and thus descends to a $\Lambda$-action on $\Hom_{\gr A} \left( \gr M, \gr N \right)$ defined via the same formula.
%

Suppose now that $M$ is a filt f.g. $\Lambda \sharp A$-module with exhaustive filtration. By proposition \ref{strictresolutionexists}, we can find a strict resolution of $M$ by filt-free, f.g. $\Lambda \sharp A$-modules: $L_{\bullet} \lra M \lra 0$, so the maps are $\Lambda$-equivariant. 

Let $N \in \mathcal F_{\Lambda \sharp A}$. Applying $\Hom_A \left( - , N \right)$ to the resolution above gives then the complex \[ 0 \lra \Hom_A(M,N) \lra \Hom_A \left( L_{\bullet}, N \right) \] whose homology computes $\Ext_A^* (M,N)$, and moreover the differentials are now $\Lambda$-equivariant, so that they induce a $\Lambda$-action on homology.

Suppose $N$ has a bounded and discrete filtration as a $\Lambda \sharp A$-module. Then these properties are preserved once we consider $N$ as a filtered $A$-module, hence proposition \ref{Homdiscretefilt} applies (because the $L_i$'s are filt-free, f.g. as $\Lambda \sharp A$-modules, and hence considered as $A$-modules they stay filt-free and the shifts degrees are the same as those as a $\Lambda \sharp A$-module, in particular they are bounded) and our complex \[ 0 \lra \Hom_A \left( L_{\bullet}, N \right) \] is in fact a flltered complex of $\Lambda$-modules.

When we apply $\gr$ we obtain (since the $L_i$'s are filt-free $A$-module and hence lemma \ref{grHomcommute} applies) the complex of $\Lambda$-modules $0 \lra \gr \underline {\Hom} \left( L_{\bullet},  N \right) $ which is isomorphic to $0 \lra \Hom_{\gr A} \left( \gr L_{\bullet}, \gr N \right)$ and hence its homology is exactly $\Ext_{\gr A}^* \left( \gr M, \gr N \right)$.
Therefore, we obtain a filtered complex spectral sequence of $\Lambda$-modules with the same $E_1$ and $E_{\infty}$-pages as in theorem \ref{abstractspectral} but now every object involved has a $\Lambda$-module structure.
We proved the following result.
\begin{cor} \label{equivariantss} Let $M \in \mathcal F_{\Lambda \sharp A}$ be filt f.g. with an exhaustive filtration, and let $N \in \mathcal F_{\Lambda \sharp A}$ have a bounded, discrete filtration.

Then there exists a cohomological spectral sequence of $\Lambda$-modules with first page \[ E_1^{r,s} = \Ext^{r+s}_{\gr A} \left(\gr M, \gr N \right)_r \Rightarrow E_{\infty}^{r,s} = \Ext_A^{r+s} \left( M, N \right)_r \] converging to the graded $\Lambda$-module associated to the filtered $\Ext_A^{r+s} \left(  M,N \right)$.
\end{cor}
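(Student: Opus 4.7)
The plan is to run the argument of Theorem \ref{abstractspectral} verbatim, but inside the larger category $\mathcal F_{\Lambda \sharp A}$, and to check at each step that the constructions are compatible with the natural $\Lambda$-action on Hom spaces defined by $(\lambda.f)(m) = \lambda(f(\lambda^{-1}m))$. First I would use Proposition \ref{strictresolutionexists} applied to the filtered ring $\Lambda \sharp A$: since $M$ is filt f.g. with exhaustive filtration as a $\Lambda \sharp A$-module, we obtain a strict resolution $L_\bullet \lra M \lra 0$ in which each $L_i$ is a filt-free, finitely generated $\Lambda \sharp A$-module. In particular, each $L_i$ is a finite direct sum of shifts of $\Lambda \sharp A$, so the shift degrees are bounded.

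Next I would apply the functor $\Hom_A(-,N)$ to this resolution. The key observation is that because $L_\bullet$ and $N$ are $\Lambda \sharp A$-modules, each $\Hom_A(L_i,N)$ inherits a natural $\Lambda$-action via the formula above, and the differentials $d_i$ (which are induced by $\Lambda$-equivariant maps $f_i$) are $\Lambda$-equivariant. This makes $\Hom_A(L_\bullet,N)$ into a complex of $\Lambda$-modules computing $\Ext_A^*(M,N)$ (and endowing it with a $\Lambda$-action which descends to the standard one on the $\Ext$-groups).

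The next step is to check that $\Hom_A(L_\bullet, N)$ carries a bounded, discrete filtration. Forgetting the $\Lambda$-structure and viewing $L_i$ as a filt-free $A$-module, one sees that $L_i$ is still a direct sum of shifts of $A$ with the same shift integers (since $\Lambda \sharp A$ is free over $A$ on the finite set of degree-zero generators coming from $\Lambda$, but here $\Lambda$ need not be finite — one must simply observe that the filtration on $\Lambda \sharp A$ induced by $\phi$ makes the underlying $A$-module filt-free with the same shift data as an $A$-module on a possibly larger index set, still bounded in degree). Combining this with the bounded-discrete hypothesis on $N$, Proposition \ref{Homdiscretefilt} applies, giving $\Hom_A(L_i,N) = \underline\Hom(L_i, N)$ with a bounded, discrete filtration; moreover the $\Lambda$-action obviously preserves the filtration by finite filt-degree.

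Finally, I would form the filtered-complex spectral sequence of $\Hom_A(L_\bullet,N)$, which converges by the boundedness and discreteness just established (Theorem A.3.1(b) of \cite{RZ}). By Lemma \ref{grHomcommute} (valid since each $L_i$ is filt-free over $A$), there is a $\Lambda$-equivariant isomorphism of complexes $\gr \underline\Hom(L_\bullet,N) \cong \Hom_{\gr A}(\gr L_\bullet, \gr N)$; since $\gr L_\bullet \lra \gr M$ is a resolution by filt-free $\gr A$-modules, the homology of the right-hand side computes $\Ext^*_{\gr A}(\gr M, \gr N)$. This identifies the $E_1$-page, and the $E_\infty$-page is, by construction, the graded $\Lambda$-module associated to the filtration on $\Ext^*_A(M,N)$. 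The main subtlety to check along the way is simply the bookkeeping that every map appearing (the differentials $d_i$, the comparison maps $\phi(L_i,N)$, and the edge maps of the spectral sequence) is $\Lambda$-equivariant, which is straightforward from the naturality of all the constructions in $M$ and $N$.
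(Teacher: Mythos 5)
Your proposal is correct and follows essentially the same route as the paper: a strict filt-free, finitely generated resolution of $M$ over $\Lambda \sharp A$ via Proposition \ref{strictresolutionexists}, then $\Hom_A(-,N)$ with its natural $\Lambda$-action, Proposition \ref{Homdiscretefilt} for the bounded and discrete filtration (using that the $L_i$, viewed over $A$, remain filt-free with bounded shift degrees), and Lemma \ref{grHomcommute} to identify the $E_1$-page. The points you flag as needing care (infinitude of $\Lambda$, $\Lambda$-equivariance of all maps) are exactly the ones the paper also handles, so nothing is missing.
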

\subsection{Proof of theorem \ref{groupLiecomparison}}
Let's now specialize to the situation we care about. Let $G$ be a compact $p$-adic analytic group, and let $A = \Zp [[G]]$ be the completed group algebra. We filter it via the powers of the augmentation ideal $I$.
As explained by Symonds and Weigel in \cite{SW}, the category of topological $\Zp[[G]]$-modules has two important subcategories, that of profinite modules $\mathcal P_A$ and that of discrete modules $\mathcal D_A$.
We filter each $\Zp[[G]]$-module by the powers of the augmentation ideal $I$: \[ F^n  M := I^n M. \]
Notice that every trivial module $M$ gets then the bounded and discrete filtration $F^0 M = M \supset F^1 M = 0$ since obviously $g-1$ sends each $M \ni m \mapsto 0$ for all $g \in G$.

As explained in \cite{SW}, section 3.2, we can consider the $\Ext$-functors: \[ \Ext: \mathcal P_A \times \mathcal D_A \lra \mathcal D_{\Zp} \] and in particular the continuous group cohomology of a discrete module $V$ is computed by \begin{equation} \label{cohomologydef} H^*(G,V) = \Ext^*_{\Zp[[G]]} \left( \Zp, V \right). \end{equation}
A theorem of Lazard (see the discussion in the introduction of \cite{SW} as well as in section 3.7 of loc. cit.) says that compact $p$-adic analytic groups are `of type $\mathbf {FP}_{\infty}$' which is to say that the profinite $\Zp[[G]]$-module $\Zp$ admits a resolution \[ \ldots \lra L_n \lra \ldots \lra L_1 \lra L_0 \lra \Zp \lra 0 \] with finite free $\Zp[[G]]$-modules $L_i$. 

In fact, the discussion in section 3.7 of \cite{SW} (see in particular after proposition 3.7.1 and theorem 3.7.4) says that for a group $G$ of type $\mathbf{FP}_{\infty}$ we can extend the definition of cohomology as in equation \ref{cohomologydef} to coefficient modules in $\mathcal P_A$.
Given $V \in \mathcal P_{\Zp[[G]]}$, its cohomology is defined as the homology of the complex $\Hom_{\Zp[[G]]} (L_{\bullet}, V)$ where $L_{\bullet} \lra \Zp \lra 0$ is any resolution of $\Zp$ by finite, free $\Zp[[G]]$-modules. \\

Let us now take $G = N = \mathrm U(\OO_F)$ to be our integral $p$-adic unipotent group, and let $\Lambda = \mathrm T(\OO_F)$ act on $N$ by conjugation and consequently on the group algebra $\Zp [[ N ]]$. This conjugation action clearly preserves the augmentation ideal filtration.

We want to apply our previous setup from subsection \ref{spectralsection}, and in particular corollary \ref{equivariantss}, to Symonds and Weigel's definition for the $N$-cohomology of algebraic modules $V$ (in particular, the trivial module $\Zp$) and get a spectral sequence which computes $H^*(N, V)$.
To do this, it remains to check that the $\Lambda \sharp A$-module $M = \Zp = \Zp[[N]] / I$ is filt f.g. with exhaustive filtration, but this is clear, since the $T(\OO_F)$-action by conjugation on $\Zp[[N]]$ yields the trivial action on $\Zp[[N]]/I$, and hence
specializing corollary \ref{equivariantss} to this situation we obtain the following
\equivariantss*
Notice in particular that the trivial $\Lambda \sharp \Zp[[N]]$-module $V= \Zp$ has bounded and discrete filtration, so the corollary applies to it and we obtain a spectral sequence computing $H^*(N, \Zp)$.

We have all the ingredients to prove our final goal, which we restate for the convenience of the reader.
\groupLie*
\begin{proof} First of all, by theorems \ref{unipotentLieiso} and \ref{envelopingalgebraiso} we have an isomorphism $\mathcal U(\mathfrak n) \cong \gr \Zp[[N]]$ of graded $\Zp$-Lie algebras, where the grading on the left is induced by the grading on $\mathfrak n$ given by root heights, and the grading on the right comes from the augmentation ideal filtration.

Since $N$ is a compact, $p$-adic analytic group, we have by theorem \ref{Lietogroupss} a convergent spectral sequence of graded $\mathrm T(\OO_F)$-modules \[ H^{r+s}_{\mathcal U(\mathfrak n)} ( \Zp, \Zp )_r \Rightarrow  H^{r+s} (N, \Zp)_r, \] and we want to show that it collapses on the first page.

Notice that the $E_1$-page is the Lie algebra cohomology for $\mathfrak n$.
Since by corollary \ref{kostantcor} $H^n (\mathfrak n, \Zp)$ is a finite rank, free $\Zp$-module, it suffices to show that for each $n$, $H^n (\mathfrak n, \Zp) \otimes_{\Zp} \Qp$ and $H^n( N, \Zp) \otimes_{\Zp} \Qp$ are $\Qp$-vector spaces of the same dimension, as this will force the abutment of the spectral sequence to coincide with its $E_1$-page.

We want to show that \begin{equation} \label{rationalisom} H^*_{\Zp} \left( \mathfrak n, \Zp \right) \otimes_{\Zp} \Qp \cong H^*( N, \Zp) \otimes_{\Zp} \Qp. \end{equation}

Consider first of the left hand side. 
%
Just like in lemma \ref{Liecohomologybasechange}, we can tensor with $\Qp$ the complex computing $H^*_{\Zp} \left( \mathfrak n, \Zp \right)$.
To apply the universal coefficient theorem (as in corollary 7.56 of \cite{rotman}) and conclude that $H^*_{\Zp} (\mathfrak n, \Zp) \otimes_{\Zp} \Qp \cong H^*_{\Qp}( \mathfrak n_{\Qp}, \Qp)$ it remains to check that the relevant $\Tor$ groups (that is, $\Tor_1^{\Zp} \left( H^*(\mathfrak n, \Zp), \Qp \right)$) are zero, but this is clear as $\Qp$ is a flat $\Zp$-module.

Now we switch our attention to the right side of formula \ref{rationalisom}. In section 4.2 of \cite{hkn} Huber, Kings and Neumann build on work of Lazard (\cite{lazard}) and prove, in particular, some comparison theorems between continuous group cohomology, analytic group cohomology and Lie algebra cohomology for a certain class of $p$-adic analytic groups.
We are interested in theorem 4.3.1 of loc. cit.: since $\mathrm U' / \Zp$ is a smooth group scheme with connected generic fiber, they describe an isomorphism \[ H^*_{la} (N, \Qp) \stackrel{\sim}{\lra} H^*_{\Qp}( \mathfrak n_{\Qp}, \Qp ) \] between analytic group cohomology and Lie algebra cohomology, with $\Qp$-coefficients.

The analytic group cohomology of a pro-$p$, $p$-adic analytic group has first been studied by Lazard in \cite{lazard}, chapter V. It can be thought of as the cohomology of the complex of locally analytic cochains (a subcomplex of the continuous cochains).

Lazard himself proves (theorem 2.3.10 chapter V in loc. cit.) that the embedding of the subcomplex of analytic cochains into the complex of continuous cochains yields, for any finite, torsion-free $\Zp$-module (such as $\Qp$), an isomorphism at the level of cohomology: \[ H^*_{la} (N, \Qp) \stackrel{\sim}{\lra} H^*_c (N, \Qp), \] where again we remark that the cohomology $H^*_c(N, \Qp)$ is defined as the homology of the complex of continuous cochains.

On the other hand, Symonds and Weigel in \cite{SW}, section 3.8 give a way to define the same rational continuous cohomology as an $\Ext$-group, and the two descriptions coincide (as is clear from Symonds and Weigel' construction of rational cohomology, where they use any projective resolution of $\Zp$ by profinite $N$-modules).

Finally, Symonds and Weigel prove in loc. cit. theorem 3.8.2 that \[ H^*_c(N, \Zp) \otimes_{\Zp} \Qp \cong H^*_c(N, \Qp), \] which concludes our proof.
\end{proof}

\end{document}